\newcommand{\et}{\mathrm{\acute{e}t}}
\newcommand\reallywidehat[1]{%

\savestack{\tmpbox}{\stretchto{%
  \scaleto{%
    \scalerel*[\widthof{\ensuremath{#1}}]{\kern-.6pt\bigwedge\kern-.6pt}%
    {\rule[-\textheight/2]{1ex}{\textheight}}
  }{\textheight}%
}{0.5ex}}%
\stackon[1pt]{#1}{\tmpbox}%
}
\numberwithin{equation}{section}
\theoremstyle{plain}
\newtheorem{theorem}{Theorem}
\newtheorem*{theorem*}{Theorem}
\newtheorem{lemma}[theorem]{Lemma}
\newtheorem{corollary}[theorem]{Corollary}
\newtheorem{proposition}[theorem]{Proposition}
\theoremstyle{remark}
\newtheorem*{remark}{Remark}
\theoremstyle{definition}
\newtheorem{definition}{Definition}
\newcommand{\Q}{{\mathbbm Q}}
\newcommand{\Z}{{\mathbbm Z}}
\newcommand{\C}{{\mathbbm C}}
\newcommand{\F}{{\mathbbm F}}
\newcommand{\cO}{{\mathcal O}}
\begin{document}

\title{Finiteness theorems for potentially  equivalent  Galois representations: extension of Faltings' finiteness criteria}

\author{Plawan Das}

\address{Chennai Mathematical Institute,  INDIA.}  
\email{d.plawan@gmail.com}

\author{C.~S.~Rajan}

\address{Tata Institute of Fundamental  Research, Homi Bhabha Road,
Bombay - 400 005, INDIA.}  \email{rajan@math.tifr.res.in}

\subjclass{Primary 11F80; Secondary 11G05, 11G10}

\begin{abstract} 
We study the relationship between potential equivalence and character theory; we observe that potential equivalence of a representation $\rho$ is determined by an equality of an $m$-power character $g\mapsto Tr(\rho(g^m))$ for some natural number $m$. Using this, we extend Faltings' finiteness criteria to determine the equivalence of two $\ell$-adic,  semisimple representations of the absolute Galois group of a number field, to the context of potential equivalence. 

We also discuss finiteness results for twist unramified representations.

\end{abstract}

\maketitle

\section{Introduction}
\subsection{Twisted good reduction}
Let $X$ be a smooth projective variety over a number field $K$ and $v$ be a finite prime in $K$. Let $K_v$ denote the completion of $K$ at $v$ and $\cO_v$ the ring of $v$-adic integers in $K_v$. The variety $X$ is said to have good reduction at $v$ if $X$ has a smooth model $\mathcal{X}_v$ over   $\cO_v$, i.e., 
the generic fibre $ \mathcal{X}_v\times_{\mathrm{Spec}(\cO_v)}\mathrm{Spec}({K_v})$ of $ \mathcal{X}_v$ is isomorphic to $X\times_{\mathrm{Spec}(K)}\mathrm{Spec}({K_v})$ and  $\mathcal{X}_v$ is a smooth projective variety over $\cO_v$. 

In (\cite{Fon}), Fontaine showed that there are no abelian varieties over $\Q$ with everywhere good reduction. It can be seen that if a smooth projective curve $C$ of genus at least one defined over $K$ has good reduction at $v$, then its Jacobian $J(C)$ also acquires good reduction at $v$. In particular, Fontaine's theorem implies that there are no smooth projective curves of genus at least one over $\Q$ which has everywhere good reduction. 

In a bid to construct higher genus curves with everywhere good reduction over number fields of small degree, Baaziz and Boxall in (\cite{BB}),  introduced the notion of \textit{twisted good reduction} of smooth projective varieties. A smooth projective variety $X$ defined over a number field  $K$ is said to have {\em twisted good reduction at a finite place $v$}, if there exists a smooth projective variety $X'$ defined over $K$ which has good reduction at $v$ and is a form of $X$, i.e., becomes isomorphic to $X$ over a finite extension $L$ of $K$. 

Baaziz and Boxall constructed infinitely many pairs $(K, C)$ consisting of a genus two curve $C$ defined over a  quadratic number field $K$ that has everywhere twisted good reduction. They also showed the finiteness of isomorphism classes of elliptic curves defined over a number field $K$ with twisted good reduction outside a finite set $S$ of places of $K$. The proof of this latter result follows the lines of the original argument due to Shafarevich (see \cite[Chapter IV]{Se1}) and rests on Siegel's lemma on transcendence. 

One of our aims here is to generalize this finiteness result to the context of $\ell$-adic representations. For this, we make use of Faltings' finiteness theorems for pure $\ell$-adic representations (\cite{D}, \cite[Chapter V, Section 2]{FW}). The finiteness criteria proved by Faltings, is that given a natural number $n$, a non-archimedean local field $F$ of characteristic zero and a finite set  $S$ of places of $K$, there exists a fixed finite set $T$ of finite places of $K$, disjoint from the  given set $S$,  such that the traces of the Frobenius conjugacy classes at the places belonging to $T$, serves to uniquely determine a 
continuous, semisimple representation from $G_K$ to $GL_n(F)$ that is unramified outside $S$. From this, assuming a purity hypothesis on the representations,  finiteness results are deduced. 

When $A$ is an abelian variety over $K$, Serre and Tate (\cite{ST}), 
extending the work of N\'eron, Ogg, and Shafarevich on elliptic curves, showed that  $A$ has good reduction at a finite place $v$ of $K$, if and only if the representation of $G_K$ on the Tate module $V_\ell(A)$ attached to $A$ is unramified at $v$, for all $\ell$ not equal to the residue characteristic at $v$. In particular, this allows us to recover the results of Baaziz and Boxall alluded to above; also generalize it to abelian varieties with `sufficiently large' algebraic monodromy groups.

\subsection{Potential equivalence}
In (\cite{PR2}), refined versions of strong multiplicity one theorem were considered for potentially equivalent $\ell$-adic representations of the absolute Galois group $G_K$. The refinement consists in weakening the hypothesis of the multiplicity one theorem by assuming only that the given representations are locally potentially equivalent at a set of places of positive upper density. 

One can consider other refinements of the strong multiplicity one theorem; for example, the analytic forms of the strong multiplicity one theorem, works with {\em finitely}  many local representations attached to the archimedean places and finite places with a bound on the cardinality of the residue field depending on the analytic conductors. 

The above considerations lead us  to consider  extending Faltings' theorem to the context of potentially equivalent representations. 
This is achieved by first observing that in our context, potential equivalence for fixed $K, n $ and $F$ is determined by the  $m$-trace function 
$\chi_{\rho}^{[m]}: g\mapsto \mbox{Tr}(\rho(g^m))$ attached to a semisimple representation $\rho:G_K\to GL_n(F)$, for some natural number $m$ depending on the parameters involved. Using Newton identities, we linearize the $m$-trace function and then appeal to Faltings' method to prove the required generalization.

\section{Potential equivalence and $m$-power characters}
In this section, we give a generalization of character theory to determine potential equivalence of representations. Various parts of this relationship have been proved earlier in (\cite{PR1}, \cite{PR2}). The exposition here possibly brings more clarity to the relationship between the two concepts. 

Two representations of a group are said to be {\em potentially equivalent} if they become isomorphic upon restricting to a subgroup of finite index. 

\begin{definition} Given a linear representation $\rho: \Gamma\to GL_n(F)$, define its {\em $m$-power character} (or $m$-character or $m$-trace) $\chi^{[m]}_{\rho}$ to be the function $g\mapsto \mbox{Tr}(\rho(g^m))$ for $g\in \Gamma$. 

Two representations of $\Gamma$ to $GL_n(F)$ are said to be {\em $m$-character equivalent} if their $m$-character functions coincide. 
\end{definition}
For what follows, we could have also worked with the semisimple parts of the Jordan decomposition of these elements in the above definition. 
When $m=1$, the $m$-trace of $\rho$ is the character of $\rho$, and we obtain the classical result that the character of a semisimple representation determines the representation. 

\begin{theorem}\label{thm:poteq-char0}
Suppose $\Gamma$ is an abstract group and $\rho_1, \rho_2$ are semisimple representations of $\Gamma$ to $GL_n(F)$, where $F$ is a field of characteristic zero. Then the following are equivalent: 
\begin{enumerate}
\item The representations $\rho_1, \rho_2$ are potentially equivalent. 
\item For some natural number $m$, the $m$-power characters are equal: 
\[\chi^{[m]}_{\rho_1}=\chi^{[m]}_{\rho_2}.\]
\end{enumerate}
\end{theorem}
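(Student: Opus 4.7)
The implication (1) $\Rightarrow$ (2) I would handle directly. Suppose $\rho_1|_H \cong \rho_2|_H$ for some finite-index subgroup $H \leq \Gamma$. Replacing $H$ by its normal core, which remains of finite index, I may assume $H$ is normal in $\Gamma$ and set $m := [\Gamma : H]$. Then $g^m \in H$ for every $g \in \Gamma$, and since $\rho_1|_H$ and $\rho_2|_H$ are conjugate by a fixed intertwiner, they have equal character on $H$. Thus $\mbox{Tr}(\rho_1(g^m)) = \mbox{Tr}(\rho_2(g^m))$ for all $g \in \Gamma$, giving $\chi_{\rho_1}^{[m]} = \chi_{\rho_2}^{[m]}$.

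For the substantive direction (2) $\Rightarrow$ (1), the plan is to pass to the Zariski closure. Since semisimple representations in characteristic zero are determined by their characters, Galois descent permits replacing $F$ by $\bar F$ without loss. I then take $G$ to be the Zariski closure of the image of $(\rho_1,\rho_2) : \Gamma \to GL(V_1) \times GL(V_2)$, and $G^\circ$ its identity component, a closed normal subgroup of finite index; correspondingly, $\Gamma^\circ := (\rho_1,\rho_2)^{-1}(G^\circ)$ is of finite index in $\Gamma$. The assignment $g \mapsto \mbox{Tr}(\rho_i(g^m))$ is a regular function on $G$ (polynomial in matrix entries), so the hypothesis, valid on the Zariski-dense subgroup $(\rho_1,\rho_2)(\Gamma)$, propagates to all of $G$ and a fortiori to $G^\circ$.

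The key structural input is that on a connected linear algebraic group over a field of characteristic zero, the $m$-th power map $[m]$ has Zariski-dense image. I would sketch this via Jordan--Chevalley: on a torus $[m]$ is surjective by divisibility over an algebraically closed field, and on a unipotent group the exponential conjugates $[m]$ into multiplication by $m$ on the Lie algebra, which is bijective in characteristic zero. Density in hand, the identity $\mbox{Tr}(\rho_1(h^m)) = \mbox{Tr}(\rho_2(h^m))$ for $h \in G^\circ$ translates into $\mbox{Tr}(\rho_1(k)) = \mbox{Tr}(\rho_2(k))$ for $k$ in a dense subset of $G^\circ$, and hence by regularity for all $k \in G^\circ$. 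Clifford's theorem ensures $\rho_i|_{G^\circ}$ remain semisimple, and in characteristic zero semisimple representations are determined by characters, so $\rho_1|_{G^\circ} \cong \rho_2|_{G^\circ}$. Restricting to the finite-index subgroup $\Gamma^\circ$ yields the potential equivalence of $\rho_1, \rho_2$.

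The main obstacle I anticipate is the Zariski density of the $m$-th power map, the bridge between the $m$-power character hypothesis and the ordinary character conclusion; without this input the implication would fail for general abstract groups, since the subgroup generated by $m$-th powers can have infinite index in a non-abelian free group. The auxiliary technical points (Galois descent, Clifford's theorem, and the verification that $\Gamma^\circ$ is Zariski-dense in $G^\circ$) are routine but worth spelling out in a careful write-up.
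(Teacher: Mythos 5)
Your proof is correct and follows essentially the same route as the paper: pass to the Zariski closure $G$ of the image of $\rho_1\times\rho_2$, use that the $m$-th power map is dominant on the connected component $G^0$ to upgrade equality of $m$-power characters to equality of ordinary characters on $G^0$, then invoke character theory for semisimple representations in characteristic zero. You are a bit more careful in spelling out the easy direction and the auxiliary points (Clifford semisimplicity of the restrictions, the density of the $m$-th power image via Jordan--Chevalley, the density of $\Gamma^0$ in $G^0$), but these are the same ingredients the paper relies on, and they are handled correctly.
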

\begin{proof}
$(1)\implies (2)$ is obvious. To prove the reverse implication, let $\rho:=\rho_1\times \rho_2$ be the product map from $\Gamma$ to $GL_n(F)\times GL_n(F)$. Denote by  $G$ the Zariski closure of the image $\rho(\Gamma)$ in $GL_n\times GL_n$. Let $p_1$ and $p_2$ denote respectively the first and the second projections of $G$ to $GL_n$. The restrictions of these projections to $\rho(\Gamma)$ give respectively the representations $\rho_1$ and $\rho_2$.  The function $g\mapsto \mbox{Tr}(g^m)$ is a rational function on $GL_n$. By Zariski density of  $\rho(\Gamma)$ in $G$,
\[ \chi^{[m]}_{p_1}(g) =\chi^{[m]}_{p_2}(g) \quad \forall g\in G.\]
Let  $G^0$ denote the connected component of identity in $G$. Since the $m$-th power map is surjective in a connected algebraic group, $\chi_{p_1}(g) = \chi_{p_2}(g)$ for all $g\in G^0$.
The representations being semisimple, it follows that $G^0$ is reductive. From the equality of characters, it follows that the  $p_1\simeq p_2$ restricted to $G^0$. Hence the representations  $\rho_1$ and $\rho_2$ are equivalent restricted to the subgroup $\Gamma^0=\rho^{-1}(\rho(\Gamma)\cap G^0(F))$ of finite index in $\Gamma$. 
\end{proof}

\subsection{Elementwise potential equivalence and local field valued representations} While considering potential equivalence, it is natural to consider an extension where we allow an equality of the $m$-power trace, but with $m$ depending on the element of the group. 
\begin{definition}
Two representations $\rho_1, \rho_2$ of $\Gamma$ to $GL_n(F)$ are said to be 
{\em elementwise potentially equivalent} if for any $g\in\Gamma$, there exists a natural number $m_g\geq 1$ such that the elements $\rho_1(g)^{m_g}$ and $\rho_2(g)^{m_g}$ are conjugate in $GL_n(F)$. 
\end{definition}

We now show that when the representations are valued in a non-archimedean local field $F$, then elementwise potential equivalence and potential equivalence are equivalent:

\begin{theorem}\label{theorem:eltwisepe}
Suppose $\Gamma$ is an abstract group and $\rho_1, \rho_2$ are semisimple representations of $\Gamma$ to $GL_n(F)$, where $F$ is a non-archimedean local field of characteristic zero. Then the following are equivalent: 
\begin{enumerate}
\item The representations $\rho_1, \rho_2$ are potentially equivalent. 
\item The representations $\rho_1, \rho_2$ are elementwise potentially equivalent. 
\item There exists a natural number $m$ depending only on $n$ and $F$, such that the $m$-power characters are equal: 
\[\chi^{[m]}_{\rho_1}=\chi^{[m]}_{\rho_2}.\]
\end{enumerate}
\end{theorem}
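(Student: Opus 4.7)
The plan is to establish the cycle $(3) \Rightarrow (1) \Rightarrow (2) \Rightarrow (3)$. The first implication is immediate from Theorem \ref{thm:poteq-char0}, which applies to any field of characteristic zero. For $(1) \Rightarrow (2)$, suppose $\rho_1|_{\Gamma'} \simeq \rho_2|_{\Gamma'}$ on a finite index subgroup $\Gamma' \subseteq \Gamma$; after passing to the normal core we may assume $\Gamma' \trianglelefteq \Gamma$ with $N := [\Gamma:\Gamma'] < \infty$, so that $g^N \in \Gamma'$ for every $g \in \Gamma$, whence the intertwining operator conjugates $\rho_1(g^N)$ into $\rho_2(g^N)$, giving elementwise potential equivalence with uniform exponent $m_g = N$.

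The substantive implication is $(2) \Rightarrow (3)$. Fix $g \in \Gamma$ with exponent $m_g$ as in the hypothesis, and let $\alpha_1, \dots, \alpha_n \in \bar F$ and $\beta_1, \dots, \beta_n \in \bar F$ be the eigenvalues of $\rho_1(g)$ and $\rho_2(g)$ respectively. Since $\rho_1(g)^{m_g}$ and $\rho_2(g)^{m_g}$ are conjugate, their characteristic polynomials agree, so the multisets $\{\alpha_i^{m_g}\}$ and $\{\beta_j^{m_g}\}$ coincide; a multiset matching then yields a permutation $\sigma$ (depending on $g$) with $\alpha_i^{m_g} = \beta_{\sigma(i)}^{m_g}$. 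Each ratio $\zeta_i := \alpha_i/\beta_{\sigma(i)}$ is therefore a root of unity, and it lies in $F(\alpha_i, \beta_{\sigma(i)})$, a field of degree at most $n^2$ over $F$ (each of the generators being a root of a polynomial of degree at most $n$).

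The essential input from the non-archimedean local-field hypothesis is the following uniform bound: for each integer $d \geq 1$ there exists a positive integer $M(F,d)$ such that every root of unity in any extension of $F$ of degree at most $d$ is annihilated by $M(F,d)$. This rests on two standard facts: $F$ admits only finitely many extensions of a given bounded degree inside $\bar F$ (by Krasner's lemma), and every finite extension of $F$ contains only finitely many roots of unity (the prime-to-$p$ part bounded by the residue field size, the $p$-power part by ramification in cyclotomic towers). Applied with $d = n^2$, set $m := M(F, n^2)$, which depends only on $F$ and $n$. Then $\zeta_i^m = 1$ for every $g$ and $i$, so $\alpha_i^m = \beta_{\sigma(i)}^m$, and summing gives
\[
\chi_{\rho_1}^{[m]}(g) \;=\; \sum_i \alpha_i^m \;=\; \sum_i \beta_i^m \;=\; \chi_{\rho_2}^{[m]}(g) \qquad \text{for all } g \in \Gamma,
\]
as required. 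The only nontrivial step is the verification of the uniform local-field bound $M(F,d)$; once this finiteness is in hand, the passage from a pointwise exponent $m_g$ to the uniform exponent $m$ is formal.
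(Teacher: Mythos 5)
Your proof is correct and follows essentially the same route as the paper's: the heart of both arguments is that for any pair of matching eigenvalues the ratio $\alpha_i/\beta_{\sigma(i)}$ is a root of unity lying in an extension of $F$ of bounded degree, and that local fields admit only finitely many extensions of bounded degree, each containing only finitely many roots of unity, which gives a uniform exponent $m$ depending only on $n$ and $F$. The only cosmetic difference is that the paper packages the relevant extensions into a single compositum $E$ of all splitting fields of degree-$n$ polynomials over $F$ and takes $m$ to be the number of roots of unity in $E$, whereas you bound each $[F(\alpha_i,\beta_{\sigma(i)}):F]$ by $n^2$ and take the uniform annihilator $M(F,n^2)$; both rest on the same Krasner-type finiteness and yield the same conclusion.
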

We need to only prove the implication $(2)\implies (3)$. This follows from the following lemma:
\begin{lemma}\label{lemma} 
	 There exists a positive integer $m$ depending only on $n$ and $F$ with the following property: suppose  $g_1,g_2$ are any two elements in $GL_n(F)$ and suppose there exists a natural number  $k\geq 1$ such that $g_1^k$ and $g_2^k$ are conjugate in $GL_n(F)$. Then, $g_1^m$ and $g_2^m$ are conjugate in $GL_n(F)$. 
\end{lemma}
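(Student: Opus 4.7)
The plan is to reduce the lemma to an eigenvalue problem via Jordan normal form, and then exploit that only finitely many roots of unity can live inside any bounded-degree extension of a non-archimedean local field of characteristic zero.  As a preliminary observation, two matrices in $GL_n(F)$ that become conjugate in $GL_n(\bar F)$ are already conjugate in $GL_n(F)$ (by descent of rational canonical form), so it suffices to match the $\bar F$-Jordan data of $g_1^m$ and $g_2^m$.  In characteristic zero, a single Jordan block of size $d$ with eigenvalue $\lambda\neq 0$ raised to a positive power $k$ remains a single Jordan block of size $d$ with eigenvalue $\lambda^k$.  Hence, writing the $\bar F$-Jordan data of $g_1$ and $g_2$ as multisets $\{(\lambda_i,d_i)\}_{i=1}^{r}$ and $\{(\mu_j,e_j)\}_{j=1}^{s}$ of (eigenvalue, block-size) pairs, the hypothesis $g_1^k\sim g_2^k$ forces $r=s$ together with a bijection $\tau$ such that $d_i=e_{\tau(i)}$ and $\lambda_i^k=\mu_{\tau(i)}^k$ for every $i$.

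Define $\zeta_i:=\mu_{\tau(i)}/\lambda_i$; each $\zeta_i$ is then a root of unity.  The crucial point is that $\zeta_i$ lies inside the compositum $F(\lambda_i,\mu_{\tau(i)})$, an extension of $F$ of degree at most $n^2$, since $\lambda_i$ and $\mu_{\tau(i)}$ are eigenvalues of matrices in $GL_n(F)$ and hence each algebraic of degree at most $n$.  The lemma therefore reduces to producing an integer $m=m(n,F)$ such that $\zeta^m=1$ for every root of unity $\zeta$ lying in some finite extension $L/F$ with $[L:F]\leq n^2$: once this is done, $\lambda_i^m=\mu_{\tau(i)}^m$ and $d_i=e_{\tau(i)}$, so $g_1^m$ and $g_2^m$ have matching Jordan data and are conjugate.

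I would prove this uniform bound as a subsidiary lemma: for each integer $d\geq 1$ there exists $M=M(F,d)$ with $\mu(L)^M=\{1\}$ for every finite extension $L/F$ of degree at most $d$.  I would split the torsion of $L^\times$ into its prime-to-$p$ part and its $p$-part, where $p$ is the residue characteristic of $F$.  Hensel's lemma embeds the prime-to-$p$ part into the residue field $k_L^{\times}$, which has order at most $q_F^{d}-1$.  For the $p$-part, if $L$ contains a primitive $p^s$-th root of unity $\zeta_{p^s}$ then $\phi(p^s)=[\Q_p(\zeta_{p^s}):\Q_p]\leq[L:\Q_p]\leq d\,[F:\Q_p]$, so $s$ is bounded in terms of $d$ and $F$ alone.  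Taking $m$ to be the exponent produced by this lemma with $d=n^2$ then closes the argument.

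I expect the main technical step to be the uniform torsion bound just described; the Jordan-form manipulations in the earlier paragraphs are essentially bookkeeping.  The non-archimedean local structure of $F$ in characteristic zero enters essentially through Hensel's lemma and the local cyclotomic degree formula over $\Q_p$, and the characteristic-zero hypothesis on $F$ is also used to guarantee that $k$-th powers of Jordan blocks remain single Jordan blocks.
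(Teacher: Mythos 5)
Your proof is correct and follows essentially the same strategy as the paper: match eigenvalues of the two matrices, observe that after raising to the $k$-th power the discrepancies are roots of unity lying in a bounded-degree extension of $F$, and use the finiteness of roots of unity in such a local extension to produce a uniform exponent $m$. The one genuine addition on your side is that the paper simply asserts ``we can assume $g_1, g_2$ are semisimple'' without justification, whereas your Jordan-block bookkeeping (in characteristic zero, a nonzero-eigenvalue Jordan block of size $d$ raised to a positive power remains a single block of size $d$) cleanly supplies the missing reduction and handles the non-semisimple case directly; your explicit torsion bound via the Teichm\"uller lift and the cyclotomic degree formula over $\Q_p$ is an unpacking of the paper's terse ``only finitely many roots of unity in $E$'' and is equally valid.
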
 

\begin{proof}
 Let $E$ be the extension of $F$ in a separable closure $\bar{F}$ of $F$, 
obtained as the compositum of splitting fields of degree $n$ polynomials with coefficients in $F$. Since there are only finitely many extensions of a local field of bounded degree, $E$ is a finite extension of $F$. In particular it is a local field. Hence, there are only finitely many roots of unity in $E$; let this cardinality be $m$. 
 
	We can assume that  $g_1$ and $g_2$ are semisimple. Let $\{\alpha_1,\cdots,\alpha_n\}$ (resp. $\{\beta_1,\cdots,\beta_n\}$) be the eigenvalues of $g_1$ (resp. $g_2$). Since these are roots of the characteristic polynomials of $g_1$ and $g_2$, they lie in $E$. By hypothesis,  up to a permutation 
	$$\alpha_i^k=\beta_i^k,\hspace{5mm}(\forall 1\leq i\leq n).$$
Hence $\alpha_i$ and $\beta_i$ differ by a root of unity, which lies in $E$. Thus from the above comment, we have 
	$\alpha_i^m=\beta_i^m$ for $ 1\leq i\leq n.$ 
	Therefore both $g_1^m$ and $g_2^m$ are conjugate in $GL_n(\bar{F})$, hence  $g_1^m$ and $g_2^m$ are conjugate in $GL_n(F)$.
\end{proof}

\subsection{Potential equivalence for $\ell$-adic representations}
We now consider potential equivalence in the context of $\ell$-adic representations of the absolute Galois group of number fields. Chebotarev-type density results allow us to work with Frobenius conjugacy classes in determining potential equivalence of representations.  

We first fix some notation.  For a place $v$ of $K$
let $K_v$ denote the completion of $K$ at $v$.  Choosing a place $w$
of $\bar{K}$ lying above $v$ allows us to identity $G_{K_v}$ with the
decomposition subgroup $D_w$ of $G_K$. As $w$ varies this gives a
conjugacy class of subgroups of $G_K$. Given a representation $\rho$
of $G_K$ as above, define the localization (or the local component) \(
\rho_v \) of $\rho$ at $v$, to be the representation of $G_{K_v}$
obtained by restricting $\rho$ to a decomposition subgroup. This is
well-defined up to isomorphism. 

Denote by $I_v$ the inertia subgroup of $G_{K_v}$. 
Let $F$ be a non-archimedean local field of characteristic zero and residue characteristic $\ell$. A continuous representation $\rho: G_K\to GL_n(F)$ is unramified at a finite place $v$ of $K$ if its localization $\rho_v$ is trivial restricted to an inertia group $I_v$ at $v$. 
We recall that the upper density of a set \( S \)
of finite places of \( K \) is defined as:
\[ 
ud (S) := \limsup_{x \to \infty } \# \{ v \in S | ~ N v \leq x \}/ \pi (x) , 
\] 
where \( \pi (x ) \) is the number of finite places \( v \) of \( K
\) with  \( N v \leq x \). Here  \( N v \) denotes the cardinality
of the residue field $k_v$ of \( K_v \). 
\begin{definition}
Suppose $\rho_1, \rho_2$ are representations of $G_K$ to $GL_n(F)$, and let $T$ be a set of places of $K$. The representations are said to be {\em locally potentially equivalent} at the set of places $T$ of $K$, if the restrictions 
 $\rho_{1,v}$ and $\rho_{2,v}$ to the local Galois groups $G_{K_v}$ are potentially equivalent at the places $v$ belonging to $T$. 
\end{definition}
 
Given a continuous 
representation  $\rho: ~G_K\to GL_n(F)$ and a place
$v $ of $K$ where $\rho$ is unramified, let $\rho(\sigma_v)$ denote
the Frobenius conjugacy class in the image group $  G_K/{\rm
Ker}(\rho) \simeq \rho( G_K) \subset GL_n(F)$.  By an abuse of
notation, we will also continue to denote by \( \rho (\sigma_v) \) an
element in the associated conjugacy class.

\begin{theorem}\label{theorem:poteq-mtraceeq}
Let $K$ be a number field and $F$ a non-archimedean local field of characteristic zero. Suppose  $\rho_1$ and $\rho_2$ are continuous, semisimple representations of $G_K$ to $GL_n(F)$. Then the following are equivalent: 
\begin{enumerate}
\item The representations $\rho_1$ and $\rho_2$ are potentially equivalent. 
\item There exists a set of places $T$ of $K$ of upper density one such that the local components $\rho_{1,v}$ and $\rho_{2,v}$ are potentially equivalent as representations of $G_{K_v}$. 
\item There exists a positive integer $m$ depending only on $n$ and $F$ such that at a set of places $T$ of upper density one of $K$ which are unramified for $\rho_1$ and $\rho_2$, 
\begin{equation}\label{eqn:assertion3}
 \mbox{Tr}~\rho_1(\sigma_v ^m)=\mbox{Tr}~\rho_2(\sigma_v^m ),\quad v\in T.
\end{equation}

\item The representations $\rho_1$ and $\rho_2$ are $m$-trace equivalent for some natural number $m$. 

\end{enumerate}

\end{theorem}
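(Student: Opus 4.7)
My plan is to close the cycle $(1)\Rightarrow(2)\Rightarrow(3)\Rightarrow(4)\Rightarrow(1)$, in which three of the four links are essentially structural and only $(3)\Rightarrow(4)$ requires a genuine Chebotarev-type density input.

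The implication $(1)\Rightarrow(2)$ I will handle by pure restriction: if $\rho_1|_{G_L}\simeq\rho_2|_{G_L}$ for a finite Galois extension $L/K$, then for every place $v$ of $K$ and every place $w$ of $L$ above $v$, the decomposition subgroup $G_{L_w}\subseteq G_{K_v}$ has finite index and the two local representations agree on it, so local potential equivalence holds at \emph{every} finite place --- a fortiori at a set of upper density one. For $(2)\Rightarrow(3)$ I will invoke Theorem~\ref{theorem:eltwisepe}: applied to the local pair $(\rho_{1,v},\rho_{2,v})$ it yields an integer $m$ with $\chi^{[m]}_{\rho_{1,v}}=\chi^{[m]}_{\rho_{2,v}}$, and crucially $m$ depends only on $n$ and $F$, not on $v$. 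So a single uniform $m$ serves all $v\in T$; intersecting $T$ with the cofinite set of places unramified for both $\rho_i$ and evaluating the common $m$-power character at the Frobenius $\sigma_v$ yields (\ref{eqn:assertion3}). The implication $(4)\Rightarrow(1)$ is then Theorem~\ref{thm:poteq-char0} applied to the abstract group $\Gamma=G_K$.

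The substantive step is $(3)\Rightarrow(4)$, and my approach is a direct continuity-and-density argument rather than a linearization via Adams operations or Newton identities. The function $\Phi(g):=\mbox{Tr}\,\rho_1(g^m)-\mbox{Tr}\,\rho_2(g^m)$ is continuous from $G_K$ (profinite topology) to $F$ (local field topology), so $Z:=\Phi^{-1}(0)$ is closed in $G_K$ and contains $\sigma_v$ for every $v\in T$ unramified for both $\rho_i$. It will then suffice to show that $\{\sigma_v:v\in T\}$ is dense in $G_K$. For any open normal subgroup $U\trianglelefteq G_K$ and any coset $gU$, Chebotarev gives the set $\{v:\sigma_v\in gU\}$ natural density $1/[G_K:U]>0$; were $T$ disjoint from it, $T$ would be contained in a set of density $1-1/[G_K:U]<1$, contradicting $ud(T)=1$. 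Therefore $\{\sigma_v:v\in T\}$ meets every basic open neighbourhood of $G_K$, hence is dense, so $Z=G_K$ and assertion~(4) follows.

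The only point requiring care is that \emph{upper} density one (rather than natural density one) still forces the Frobenii to meet every coset of every open normal subgroup of $G_K$; this is exactly where the hypothesis $ud(T)=1$ is used, in the short comparison in the previous paragraph. Everything else in the cycle is a careful packaging of Theorems~\ref{thm:poteq-char0} and~\ref{theorem:eltwisepe} with the Chebotarev density theorem, and I do not anticipate further obstacles.
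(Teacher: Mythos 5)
Your overall cycle $(1)\Rightarrow(2)\Rightarrow(3)\Rightarrow(4)\Rightarrow(1)$ matches the paper's, and the terminal and initial links ($(1)\Rightarrow(2)$ by restriction, $(4)\Rightarrow(1)$ by Theorem~\ref{thm:poteq-char0}) are identical. The interesting divergence is in $(3)\Rightarrow(4)$: the paper invokes the \emph{algebraic} Chebotarev density theorem (Theorem~\ref{Algebraic-Chebotarev}), showing that the Zariski closure of $(\rho_1\times\rho_2)(G_K)$ is contained in the conjugation-invariant closed subvariety $X^{[m]}=\{(g_1,g_2):\mathrm{Tr}(g_1^m)=\mathrm{Tr}(g_2^m)\}$; you instead use the \emph{topological} Chebotarev theorem directly, observing that the set $Z=\Phi^{-1}(0)$ with $\Phi(g)=\mathrm{Tr}\,\rho_1(g^m)-\mathrm{Tr}\,\rho_2(g^m)$ is closed and conjugation-invariant in $G_K$, and that an upper-density-one set of primes forces the Frobenius cosets to hit every coset of every open normal subgroup, so that $Z$ is dense and hence all of $G_K$. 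Your route is more elementary (it avoids the machinery of the algebraic monodromy group entirely) and suffices for the implication as stated, while the paper's version yields the slightly stronger conclusion that the trace identity holds on the whole algebraic monodromy group and not just on $\rho(G_K)$; both are legitimate. Two small corrections are needed in your write-up. First, in $(2)\Rightarrow(3)$ you speak of the ``cofinite set of places unramified for both $\rho_i$''; the theorem places no finiteness hypothesis on the ramification locus of $\rho_1,\rho_2$, so this set is not known to be cofinite. What is true, and what the paper explicitly cites from \cite{KR}, is that the ramified set has density zero, which is enough to intersect with a density-one $T$ and retain upper density one. Second, applying Theorem~\ref{theorem:eltwisepe} to the local pair $(\rho_{1,v},\rho_{2,v})$ presupposes that these local restrictions are semisimple, which is not automatic; it is cleaner (and is what the paper does) to extract an exponent $k_v$ directly from the finite-index subgroup realizing local potential equivalence and then feed the conjugacy $\rho_1(\sigma_v)^{k_v}\sim\rho_2(\sigma_v)^{k_v}$ into Lemma~\ref{lemma} to obtain the uniform $m$. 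Finally, your quoted density $1/[G_K:U]$ for $\{v:\sigma_v\in gU\}$ is really $|C|/[G_K:U]$ where $C$ is the conjugacy class of $gU$; since you only need positivity this is immaterial, but worth stating correctly.
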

\begin{proof}
The implication $(1)\implies (2)$ is obvious, and $(4)\implies (1)$  is proved above. 

For the implication that $(2)\implies (3)$, it is proved in (\cite{KR}) that the collection of ramified places for any continuous semisimple representation of $G_K$ is of density zero. Hence, after removing a set of places of density zero, it can be assumed that at a set $T$ of finite places of $K$ that are unramified for $\rho_1$ and $\rho_2$ and having upper density one, the following holds true: 
for each $v\in T$, there is some natural number $k_v$ such that the conjugacy classes $\rho_1(\sigma_v^{k_v})$ and   $\rho_2(\sigma_v^{k_v})$ are conjugate in $GL_n(F)$. Part $(3)$ now follows from Theorem \ref{theorem:eltwisepe}. 

Hence it is left to prove $(3)\implies (4)$.   We first recall the following algebraic analogue of Chebotarev density theorem (\cite{Ra}), giving an algebraic interpretation of results proved in
Section 6 (especially Proposition 15) of (\cite{Se2}):

\begin{theorem}\label{Algebraic-Chebotarev}\cite[Theorem 3]{Ra}~~
Let \( M \) be an algebraic group defined over a $\ell$-adic local field
\( F \) of characteristic zero.  Suppose 
\[
\rho : G_K \rightarrow M(F) 
\]
is a continuous representation unramified
 outside a set $S$ of places of $K$ of density zero. 

Suppose  \( X \) is a closed subscheme of \( M\) 
defined over \( F \) and stable under the adjoint action of \( M \) 
on itself. Let $C := X(F) \cap \rho ( G_K ) .$ Let \( G_{\rho} \)  be
the algebraic  monodromy  group attached to \( \rho \) over \( F \),
i.e.,  the smallest algebraic subgroup $G_{\rho}$ of $M$ defined
over $F$ such that $\rho(G_K)\subset M(F)$.

Then the set $
T: =  \{ v \not\in S ~| ~ \rho ( \sigma_v ) \subset C \}$, 
has a density given by 
\[
d(T) = \frac{ | \Psi | }{ | \Phi | }, 
\]
where $\Phi$ is the set of connected components of $G_{\rho}$, and 
$\Psi $ is the set of those $\phi \in \Phi$ such that the
corresponding connected component $G_{\rho}^\phi$ of $G_{\rho}$ is contained in $X $.
\end{theorem}

\begin{remark} The foregoing theorem is proved in (\cite{Ra}), under the assumption that $S$ is finite. The proof goes through without any change upon assuming that $S$ is a set of places of density zero. The  results of (\cite{KR}) ensure that for any continuous semisimple representation of $G_K$, the set of ramified places is of density zero. This allows us to frame the hypothesis of Theorem \ref{Algebraic-Chebotarev} for general continuous semisimple representations rather than requiring that they be unramified outside a finite set $S$ of places of $K$. 

\end{remark}

Consider the subset of $GL_n\times GL_n$ consisting of elements with the same `$m$-trace':
\[X^{[m]}:=\{(g_1,g_2)\in GL_n\times GL_n~|~\mbox{Tr}~ (g_1^m) = \mbox{Tr}~ (g_2^m)\}.\]
The set $X^{[m]}$ is a conjugacy invariant, Zariski closed subvariety of $GL_n\times GL_n$. Let $G$ be the Zariski closure of the image $\rho(G_K):=(\rho_1\times \rho_2)(G_K)$ of $G_K$ in $GL_n\times GL_n$, and $G^0$ be the connected component of identity in $G$. From equation (\ref{eqn:assertion3}), at the places  $v\in T$, the corresponding pair 
$(\rho_1(\sigma_v), \rho_2(\sigma_v ))$ belongs to $X^{[m]}$. By Assertion $(3)$, the set of places $T$ has upper density one. Applying Theorem \ref{Algebraic-Chebotarev}, it follows that 
 $G\subset X^{[m]}$.  In particular, this yields Part $(4)$ of the theorem, assuming $(3)$. 

\end{proof}

\section{Finiteness criteria for potential equivalence}	
In \cite{Fa}, Faltings gave a finite criteria for the equivalence
of two $\ell$-adic representations of the absolute Galois
group $G_K$ of a number field $K$, which are 
unramified outside a fixed finite set $S$ of places of $K$: 
\begin{theorem}[Faltings]  \label{theorem:Faltings}
Let $K$ be a number field and $\ell$ a rational prime. Let $S$ be a finite set of non-archimedean
places of $K$ containing the places dividing $\ell$ of $K$, and $F$ be a non-archimedean local field of characteristic zero and residue characteristic $\ell$. 
Fix a natural number $n$. 

Then there exists 
a finite set $T$ of finite places of  $K$,
disjoint from $S$,  such that the isomorphism class of any continuous semi-simple representation $\rho:G_K\longrightarrow GL_n(F)$ unramified outside $S$, is determined uniquely by the finite collection $\{\text{Tr}~\rho(\sigma_v)\mid v\in T\}$, where 
$\rho(\sigma_v)$ denotes the Frobenius conjugacy class corresponding to the place $v$ in the group $\rho(G_K)$ and $\text{Tr}$ is the trace of a square matrix. 
\end{theorem}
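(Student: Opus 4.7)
The plan is to reduce the problem to a finite-dimensional linear-algebra statement and then combine Hermite-Minkowski with Chebotarev to produce a uniform test set $T$. First I would pick a Galois-stable $\cO_F$-lattice $\Lambda\subset F^{2n}$ for the direct sum $\rho:=\rho_1\oplus\rho_2$ (which exists by compactness of the image), so that $\rho$ takes values in $\mathrm{Aut}_{\cO_F}(\Lambda)$ and factors through $G_{K,S}$, the Galois group of the maximal extension of $K$ unramified outside $S$. Let $R$ be the $\cO_F$-subalgebra of $\mathrm{End}_{\cO_F}(\Lambda)$ generated by $\rho(G_{K,S})$. Since $\rho(g)\rho(h)=\rho(gh)$, the algebra $R$ coincides with the $\cO_F$-linear span of $\rho(G_{K,S})$; it sits inside $M_{2n}(\cO_F)$, hence is a finitely generated $\cO_F$-module of rank at most $(2n)^2$. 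The $\cO_F$-linear functional $\tau(A,B):=\mbox{Tr}(A)-\mbox{Tr}(B)$ on $M_n(\cO_F)\oplus M_n(\cO_F)$ detects equality of traces, so by Brauer--Nesbitt (valid in characteristic zero for semisimple representations) the goal reduces to showing that $\tau$ vanishes on all of $R$.

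Next I would produce, uniformly in $\rho$, a single finite extension $L_0/K$ depending only on $n$, $F$, and $S$, through which every reduction $\bar\rho:=\rho\bmod\mathfrak{p}$ factors. With $k=\cO_F/\mathfrak{p}$, the image $\bar\rho(G_{K,S})$ is a finite subgroup of $GL_{2n}(k)$ of order at most $|GL_{2n}(k)|$, and its kernel cuts out a finite Galois extension $L/K$ unramified outside $S$ with $[L:K]\le |GL_{2n}(k)|$. Hermite--Minkowski then yields only finitely many possibilities for such $L$; I take $L_0$ to be their compositum, still a finite extension of $K$ unramified outside $S$.

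With $L_0$ in hand, I apply Chebotarev density to pick a finite set $T$ of finite places of $K$, disjoint from $S$, such that $\{\sigma_v\bmod L_0\mid v\in T\}$ exhausts $\mathrm{Gal}(L_0/K)$. For any $g\in G_{K,S}$, $\bar\rho(g)$ depends only on the image of $g$ in $\mathrm{Gal}(L_0/K)$, and hence equals $\bar\rho(\sigma_v)$ for some $v\in T$; so $\bar R:=R/\mathfrak{p}R$ is $k$-spanned by $\{\bar\rho(\sigma_v)\mid v\in T\}$. Nakayama's lemma applied to the finitely generated $\cO_F$-module $R$ then upgrades this to the statement that $\{\rho(\sigma_v)\mid v\in T\}$ generates $R$ as an $\cO_F$-module. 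Assuming $\mbox{Tr}\,\rho_1(\sigma_v)=\mbox{Tr}\,\rho_2(\sigma_v)$ for all $v\in T$, the functional $\tau$ vanishes on these generators, hence on all of $R$, so $\mbox{Tr}\,\rho_1=\mbox{Tr}\,\rho_2$ on $G_{K,S}$ and $\rho_1\cong\rho_2$.

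The main obstacle is the uniformity in the second paragraph: it is essential that $L_0$ depends only on $n$, $F$, and $S$, not on the particular pair $(\rho_1,\rho_2)$. This is exactly where Hermite--Minkowski enters, applied to the bounded family of extensions $L/K$ produced by the $\bmod\,\mathfrak{p}$ reductions; everything else is a direct assembly of compactness (for the lattice), Chebotarev, Nakayama, and Brauer--Nesbitt.
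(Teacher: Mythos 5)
Your overall strategy matches the one the paper follows in its proof of Theorem~\ref{theorem:main}, which is modeled on the Faltings--Deligne argument: produce the $\cO_F$-algebra $R$ spanned by $\rho(G_{K,S})$, bound the reduction mod~$\mathfrak{p}$ uniformly, combine Hermite--Minkowski with Chebotarev to find $T$, lift by Nakayama, and finish with Brauer--Nesbitt. However, there is a genuine gap at the step where you pass from the Chebotarev-covering property of $T$ to the assertion that $\bar R = R/\mathfrak{p}R$ is $k$-spanned by $\{\bar\rho(\sigma_v) \mid v\in T\}$.

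The problem is a conflation of two distinct reductions. You define $L$ via $\ker\bar\rho$, the kernel of the composite $G_{K,S}\to GL_{2n}(\cO_F)\to GL_{2n}(k)$. But Nakayama requires $T$ to cover the image of $G_{K,S}$ in $(R/\mathfrak{p}R)^*$, and the natural map $R/\mathfrak{p}R\to M_{2n}(k)$ is generally \emph{not} injective, because $\mathfrak{p}R$ can be a proper submodule of $R\cap\mathfrak{p}M_{2n}(\cO_F)$ (i.e.\ $R$ need not be a saturated $\cO_F$-sublattice, even when $\rho$ is semisimple). Concretely: take $\rho$ to be $1\oplus\chi$ with $\chi$ an unramified-outside-$S$ character valued in $1+\pi^k\cO_F^\times$ for $k\ge1$; then $R\cong\{(a,a+\pi^k b)\}\subset\cO_F^2$, $R/\mathfrak{p}R\cong k[\epsilon]/\epsilon^2$, the image of $G_K$ in $(R/\mathfrak{p}R)^*$ has order up to $|k|$, yet $\bar\rho$ is trivial, so $L=K$ and your $T$ would only need a single place. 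Frobenii at that $T$ span a $1$-dimensional $k$-subspace of the $2$-dimensional $\bar R$, and Nakayama fails. Thus $\ker(G_K\to (R/\mathfrak{p}R)^*)$ can be strictly smaller than $\ker\bar\rho$, and the extension it cuts out can be strictly larger than your $L$; your $L_0$, built from the smaller field, may not contain it.

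The fix is exactly the one the paper makes in step~(2) of its proof of Theorem~\ref{theorem:main}: bound the image of $G_K$ in $(R/\mathfrak{p}R)^*$ directly, using $\dim_k(R/\mathfrak{p}R)=\mathrm{rank}_{\cO_F}R\le(2n)^2$, so that the image has order at most $|k|^{(2n)^2}$; apply Hermite--Minkowski with this (larger) bound to define $L_0$; and then choose $T$ by Chebotarev covering $\mathrm{Gal}(L_0/K)$. With that replacement the rest of your argument goes through exactly as written.
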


We now extend Faltings' criteria 
to the context of $m$-power characters:

\begin{theorem}\label{theorem:main}
	Let $K$ be a number field. Fix a rational prime $\ell$ and natural numbers $n, a$ and $m$. Let  $S$ be a finite set of places of $K$ containing the archimedean places 
	and the places above $\ell$ 
	of $K$.
	
	Then there exists a finite set $T$ of finite places of $K$ disjoint from $S$, 
	with the following property: suppose  $\rho_1, \rho_2:G_K\longrightarrow GL_n(F)$ are two continuous  semisimple representations of $G_K$, where $F$ is a non-archimedean field with residue field of cardinality $\ell^a$. Assume that for $v$ not in $S$, the inertia at $v$ acts by scalar matrices of order dividing $m$, and that the representations satisfy  the following hypothesis: 
	$$\mbox{Tr}~\rho_1(\sigma_v^m )=\mbox{Tr}~\rho_2(\sigma_v^m ), \quad \text{for}~v\in T.$$
	Then $\rho_1$ and $\rho_2$ are potentially isomorphic. 
\end{theorem}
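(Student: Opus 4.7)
The plan is to reduce Theorem \ref{theorem:main} to Faltings' finiteness criterion (Theorem \ref{theorem:Faltings}) by \emph{linearizing} the $m$-power trace via Newton's identities. For any semisimple $\rho: G_K \to GL_n(F)$ with eigenvalues $\lambda_1(g),\ldots,\lambda_n(g)$ at $g$, Newton's identity expresses the power sum $p_m(g) = \sum_i \lambda_i(g)^m = \mathrm{Tr}(\rho(g^m))$ as a polynomial with integer coefficients in the elementary symmetric functions $e_k(g) = \mathrm{Tr}(\wedge^k \rho(g))$; crucially, each monomial $e_1^{a_1} e_2^{a_2} \cdots$ appearing is homogeneous of weight $\sum_k k\, a_k = m$. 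Translated to the representation ring of $G_K$, this gives a decomposition of the virtual representation
\[
\psi^m[\rho] \;=\; [A_m(\rho)] - [B_m(\rho)],
\]
where $A_m(\rho), B_m(\rho)$ are genuine semisimple representations of a fixed dimension $N = N(n,m)$, assembled (with multiplicities coming from the positive and negative coefficients of Newton's polynomial) out of tensor products $(\wedge^1 \rho)^{\otimes a_1} \otimes (\wedge^2 \rho)^{\otimes a_2} \otimes \cdots$ with $\sum_k k\, a_k = m$, and designed so that
\[
\mathrm{Tr}\bigl(A_m(\rho)(g)\bigr) - \mathrm{Tr}\bigl(B_m(\rho)(g)\bigr) \;=\; \mathrm{Tr}(\rho(g^m)) \quad \text{for every } g \in G_K.
\]

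The weight-$m$ homogeneity dovetails with the scalar inertia hypothesis: if $\rho(I_v) \subseteq \{\zeta \cdot I_n : \zeta^m = 1\}$, then $\wedge^k \rho(I_v)$ acts by $\zeta^k \cdot I$, so on each monomial summand of $A_m(\rho)$ and $B_m(\rho)$ inertia acts by $\zeta^{\sum_k k a_k} \cdot I = \zeta^m \cdot I = I$. Hence $A_m(\rho_i)$ and $B_m(\rho_i)$ are unramified outside $S$ (and remain semisimple, since tensor products and exterior powers of semisimple representations in characteristic zero are semisimple). Rearranging the trace identity, the hypothesis $\mathrm{Tr}\,\rho_1(\sigma_v^m) = \mathrm{Tr}\,\rho_2(\sigma_v^m)$ for $v \in T$ is equivalent to
\[
\mathrm{Tr}\bigl((A_m(\rho_1) \oplus B_m(\rho_2))(\sigma_v)\bigr) \;=\; \mathrm{Tr}\bigl((A_m(\rho_2) \oplus B_m(\rho_1))(\sigma_v)\bigr), \quad v \in T,
\]
an equality of Frobenius traces for two semisimple $2N$-dimensional representations of $G_K$ into $GL_{2N}(F)$ that are both unramified outside $S$.

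Faltings' Theorem \ref{theorem:Faltings}, applied with dimension $2N$ in place of $n$ and the same data $(K,\ell,S,F)$, now produces a finite set $T$ of places of $K$ disjoint from $S$ such that Frobenius traces on $T$ pin down the isomorphism class of any $2N$-dimensional continuous semisimple $G_K$-representation unramified outside $S$. This forces $A_m(\rho_1) \oplus B_m(\rho_2) \cong A_m(\rho_2) \oplus B_m(\rho_1)$, hence their characters agree on all of $G_K$, which yields $\mathrm{Tr}\,\rho_1(g^m) = \mathrm{Tr}\,\rho_2(g^m)$ for every $g \in G_K$. A final appeal to the implication $(4) \Rightarrow (1)$ of Theorem \ref{theorem:poteq-mtraceeq} delivers the potential isomorphism of $\rho_1$ and $\rho_2$. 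The step I expect to require the most care is the verification that every monomial occurring in Newton's expansion of $p_m$ is unramified under the scalar-of-order-dividing-$m$ inertia hypothesis, which is precisely the combinatorial content of the weight-$m$ homogeneity; once this is in place the route through Faltings' criterion proceeds smoothly, modulo the minor bookkeeping that the set $T$ provided by Faltings' theorem can be chosen to depend only on the residue cardinality $\ell^a$ (not on the specific $F$), so that the same $T$ works uniformly across all admissible target fields.
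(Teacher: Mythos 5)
Your proposal is correct, and while it shares the paper's central idea --- linearizing $g \mapsto \mathrm{Tr}\,\rho(g^m)$ via Newton's identities into characters of the exterior-power representations $\Lambda^{\bar{r}}\rho$ with $d(\bar{r})=m$, and exploiting the weight-$m$ homogeneity to kill ramification under the scalar-of-order-$m$ inertia hypothesis --- the packaging is genuinely different. The paper does not invoke Theorem \ref{theorem:Faltings} as a black box; it re-runs the Faltings machinery from scratch (Hermite--Minkowski to bound the mod-$\mathfrak{m}_F$ image, Chebotarev to build $T$, Nakayama's lemma on the ${\mathcal O}_F$-module $M$ spanned by $\rho_1^{[m]}\times\rho_2^{[m]}(G_K)$, then the Newton identity as a linear functional on $M$). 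You instead split the virtual representation $\psi^m[\rho] = [A_m(\rho)] - [B_m(\rho)]$ into its positive and negative Newton parts, observe that both $A_m(\rho_i) \oplus B_m(\rho_j)$ are genuine semisimple representations of a fixed dimension unramified outside $S$, and reduce directly to Faltings' criterion in that higher dimension. This is cleaner and more modular. The one point your proposal leans on that Theorem \ref{theorem:Faltings} as stated does not literally supply is the uniformity of $T$ over all $F$ with residue cardinality $\ell^a$: the theorem fixes $F$, while the statement you are proving lets $F$ vary. You flag this correctly as "bookkeeping", and it is indeed true --- the Hermite--Minkowski bound in Faltings' proof depends only on $\ell^a$, $n$, $K$, $S$ --- but to make your reduction fully rigorous you should either cite a version of Faltings' criterion stated with that uniformity, or note that the $T$ produced by its proof has this property. (One small imprecision: $A_m(\rho)$ and $B_m(\rho)$ need not have equal dimension, so "$2N$" should be $N_A + N_B$; this changes nothing.) The payoff of the paper's approach is self-containment and explicit constants ($d_m$, $\ell^{ad_m}$, $L^{[m]}$); the payoff of yours is conceptual economy and a reusable reduction.
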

We make two remarks: suppose $w$ is a place of $\bar{K}$ lying above $v$ not in $S$. By hypothesis, the inertia at $w$ acts by a scalar. As we vary the place $w$ lying above $v$, the conjugacy class of Frobenius elements $\rho(\sigma_w )$ are well-defined only up to scalar matrices of order dividing $m$, where $m$ is chosen as in Lemma \ref{lemma}. However the conjugacy class $\rho(\sigma_v^m)$ is well-defined. 

Note also that the field $F$ is allowed to vary: only the cardinality of its residue field is fixed. For example, one can work with $F$, a totally ramified extension of $\Q_{\ell}$ without affecting the finiteness bounds. 

 The proof of this theorem follows that of Faltings' theorem, once the function $\chi^{[m]}$ is linearized using Newton's identities. The proof is given in the following steps:

\begin{enumerate}[wide, labelwidth=!, labelindent=0pt]
\item{\em Construction of $T$.}
Let $V=F^n$. For $1\leq i\leq n$, let  $\Lambda^iV$ denote the $i$-th exterior vector space associated to $V$. For any $n$-tuple of non-negative integers $\bar{r}:=(r_1,\cdots, r_n),$ consider the vector space, 
\[\Lambda^{\bar{r}}V:= V^{\otimes r_1}\otimes( \Lambda^2V)^{\otimes r_2}\otimes \cdots \otimes (\Lambda^nV)^{\otimes r_n}. \]
Given a representation $\rho$ of $G_K$ on $V$, there is an associated representation $\Lambda^{\bar{r}}\rho$ on the space $\Lambda^{\bar{r}}V$. Define the degree $d(\bar{r})$ of $\bar{r}$ as $d(\bar{r})=r_1+2r_2+\cdots+nr_n$. Denote by $\mbox{dim}(\bar{r}, V)$, the dimension of the space $\Lambda^{\bar{r}}V$. The hypothesis that the inertial action outside $S$ acts by scalar matrices of order dividing $m$, implies that the associated representations $\Lambda^{\bar{r}}\rho$ are unramified outside $S$ provided $d(\bar{r})=m$. Let
\[ d_m=2\left(\sum_{d(\bar{r})=m} \mbox{dim}(\bar{r}, V)^2\right).\]

It follows from the finiteness theorem of Hermite-Minkowski that there are only finitely many Galois extensions $L/K$ that are unramified outside $S$ and degree bounded by $\ell^{ad_m}$. Let $L^{[m]}$ denote the finite Galois extension of $K$ given by the compositum of these finite extensions. 

By Chebotarev density theorem, we choose a finite set $T$ of places of $K$ such that Frobenius conjugacy classes at $v$ of $L^{[m]}/K$ cover all the conjugacy classes of $G(L^{[m]}/K)$. \\

\item{\em Nakayama's lemma and finite generation.} 
Given a representation $\rho$ of $G_K$ acting on $V$, consider the associated representation $\rho^{[m]}$ of $G_K$ on $V^{[m]}$, where 
\[ V^{[m]}=\oplus_{d(\bar{r})=m}  \Lambda^{\bar{r}}V.\]

Suppose $\rho_1$ and $\rho_2$ are as in the hypothesis of the theorem. Let $V_1$ and $V_2$ be the underlying vector spaces on which these representations act. The assumption on the action of inertia outside $S$ implies that the representations  $\Lambda^{\bar{r}}\rho_1$ and $\Lambda^{\bar{r}}\rho_2$ are unramified outside $S$, where $d(\bar{r})=m$. Let ${\mathcal O}_F$ denote the ring of integers of $F$. 
Let $M$ be the ${\mathcal O}_F$ submodule of 
\[ \oplus_{d(\bar{r})=m}\mbox{End}(V_1^{\bar{r}})\oplus \mbox{End}(V_2^{\bar{r}}),\]
generated by the image of $G_K$.  Its rank over ${\mathcal O}_F$ is at most $d_m$. 

Let ${\mathfrak m}_F$ denote the maximal ideal in ${\mathcal O}_F$. The dimension of the quotient module $M/{\mathfrak m}_FM$ over the residue field $k_F$ is at most $d_m$. Thus the cardinality of the image of the Galois group $G_K$ in $(M/{\mathfrak m}_FM)^*$ is less than $\ell^{ad_m}$. Since  it is unramified outside $S$, it is a quotient of $G(L^{[m]}/K)$. 
By the construction of $T$, the image of the Frobenius conjugacy classes at $v\in T$ of the representation of $G_K$ on $M/{\mathfrak m}_FM$,   generate the quotient module $M/{\mathfrak m}_FM$. 

By Nakayama's lemma, it follows that the Frobenius conjugacy classes $(\rho_1^{[m]}\times \rho_2^{[m]})(\sigma_v)$ for $v \in T$ generate the module $M$. \\

\item{\em Newton's identity.} Newton's identities allow us to write the function $\chi_{\rho}^{[m]}(g)=\chi_{\rho}(g^m)$ as an integral linear combination of the characters  $\chi_{\Lambda^{\bar{r}}\rho}$  associated to the  representation $\Lambda^{\bar{r}}\rho$ of $G_K$. For any $m\geq 1$, Newton's identity for $g\in GL_n(F)$ gives, 
\[ \mbox{Tr}(g^m)=(-1)^mm\sum_{d(\bar{r})=m}\frac{(r_1+r_2+\cdots+r_n -1)!}{r_1!r_2!\cdots r_n!}\prod_i(-e_i)^{r_i},\]
where $e_i=\mbox{Tr}(\Lambda^i(g))$ is the $i$-th elementary symmetric function. Here we have used the fact that since the dimension is $n$, only the elementary symmetric functions with index at most $n$ contribute to the sum. In terms of characters, this yields the following expression for $\chi_{\rho}^{[m]}$:
\begin{equation}\label{eqn:Newton}
\chi_{\rho}^{[m]}(g)=(-1)^mm\sum_{d(\bar{r})=m}\frac{(r_1+r_2+\cdots+r_n-1)!}{r_1!r_2!\cdots r_n!}(-1)^{\sum r_i}\chi_{\Lambda^{\bar{r}}\rho}.
\end{equation} 
Since the collection of Frobenius conjugacy classes $(\rho_1^{[m]}\times \rho_2^{[m]})(\sigma_v)$ for $v\in T$ generate the ${\mathcal O}_F$-module $M$, they also generate the image of $M$ in the module $\mbox{End}(\Lambda^{\bar{r}}V_1)\oplus \mbox{End}(\Lambda^{\bar{r}}V_2)$ for $d(\bar{r})=m$. It follows that the character values $\chi_{\Lambda^{\bar{r}}\rho}$ are determined by their evaluations at this conjugacy classes. The Newton identity 
expresses the function $\chi_{\rho}^{[m]}$ as an integral linear combination of the characters $\chi_{\Lambda^{\bar{r}}\rho}$. Hence under the assumption of the theorem, we obtain
\[ \chi_{\rho_1}^{[m]}(g)=\chi_{\rho_2}^{[m]}(g)\quad \forall g\in G_K.\]

\item{\em Proof of Theorem \ref{theorem:main}.} The proof of Theorem \ref{theorem:main} is now completed by appealing to Theorem \ref{theorem:poteq-mtraceeq}.

\end{enumerate}

We now give an extension of  Faltings' criteria 
to the context of potential equivalence, but now we need to work with a fixed local field $F$: 

\begin{corollary}\label{cor:pe}
	Let $K$ be a number field. Fix a natural numbers $n$  and a non-archimedean local field $F$ of residue characteristic  $\ell$. 
	Let  $S$ be a finite set of places of $K$ containing the archimedean places 
	and the places above $\ell$ 
	of $K$.
	
	Then there exists a finite set $T$ of finite places of $K$ disjoint from $S$, 
	with the following property: suppose  $\rho_1, \rho_2:G_K\longrightarrow GL_n(F)$ are two continuous  semisimple representations of $G_K$. Assume that for $v$ not in $S$, the inertia at $v$ acts by scalar matrices, and that the representations  are locally potentially equivalent at the set of places $v\in T$.

	Then $\rho_1$ and $\rho_2$ are potentially isomorphic. 
\end{corollary}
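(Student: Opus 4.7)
The plan is to derive Corollary \ref{cor:pe} from Theorem \ref{theorem:main} by isolating, at the outset, a single exponent $m$ attached to the pair $(n,F)$.

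First, invoke Lemma \ref{lemma} to fix an integer $m = m(n, F)$ with the property that any two elements of $GL_n(F)$ whose $k$-th powers are conjugate for some $k \geq 1$ already have conjugate $m$-th powers in $GL_n(F)$. Writing $\ell^a$ for the cardinality of the residue field of $F$, apply Theorem \ref{theorem:main} to the quadruple $(n, a, m, S)$ to produce a finite set $T$ of finite places of $K$, disjoint from $S$. This is the $T$ we propose for the Corollary.

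The verification of the hypotheses of Theorem \ref{theorem:main} for $\rho_1, \rho_2$ is then a direct translation, via Theorem \ref{theorem:eltwisepe}, of local potential equivalence into an $m$-th trace equality. At each $v \in T$, the local representations $\rho_{1,v}, \rho_{2,v}$ of the abstract group $G_{K_v}$ into $GL_n(F)$ are potentially equivalent by hypothesis; Theorem \ref{theorem:eltwisepe} then gives $m$-character equality on all of $G_{K_v}$,
\[
\mathrm{Tr}\,\rho_{1,v}(g^m) = \mathrm{Tr}\,\rho_{2,v}(g^m) \qquad \text{for every } g \in G_{K_v},
\]
and evaluating at a Frobenius lift $\sigma_v$ produces the trace equality at Frobenius demanded by Theorem \ref{theorem:main}. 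Applying Theorem \ref{theorem:main} then delivers potential isomorphism of $\rho_1$ and $\rho_2$, proving the Corollary.

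The main obstacle is reconciling the slightly different inertial hypotheses of the two statements: Theorem \ref{theorem:main} asks that the inertia at $v \notin S$ act by scalars \emph{of order dividing $m$}, whereas Corollary \ref{cor:pe} only stipulates scalar action. I would handle this by enlarging $m$ at the outset, which Lemma \ref{lemma} permits since the resulting integer still depends only on $n$ and $F$, so as to absorb the order of any continuous scalar character $I_v \to F^\times$ that can arise. Since $v \notin S$ has residue characteristic different from $\ell$, the wild part of $I_v$ maps into the finite group $\mu(F)$, while the tame part is governed by a controlled $\mathbb{Z}_\ell$-factor; the resulting contribution to the scalar order is uniformly bounded in terms of $n$ and $F$, and a suitable multiple of $m$ (still depending only on $n,F$) absorbs it. With this adjustment, the application of Theorem \ref{theorem:main} proceeds unchanged and the argument closes.
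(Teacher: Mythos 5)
Your overall strategy matches the paper's: fix $m$ depending only on $n$ and $F$ via Lemma \ref{lemma}, feed this $m$ into Theorem \ref{theorem:main} to get the finite set $T$, and convert the hypothesis of local potential equivalence at $v\in T$ into the $m$-power trace equality at Frobenius via Theorem \ref{theorem:eltwisepe}/Lemma \ref{lemma}. This is exactly the route the paper takes in its (very terse) proof.

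The one place your write-up is shaky is the justification that the inertial scalar character $\psi_v:I_v\to F^{\times}$ has order dividing an integer depending only on $n$ and $F$. Your argument splits $I_v$ into wild and tame parts and asserts that ``the tame part is governed by a controlled $\Z_\ell$-factor; the resulting contribution to the scalar order is uniformly bounded in terms of $n$ and $F$.'' That is not a correct reason: the tame quotient of $I_v$ contains a $\Z_\ell$-factor, and a continuous homomorphism $\Z_\ell\to \mathcal O_F^{\times}$ can perfectly well be injective into $1+\mathfrak m_F$, with infinite image. The finiteness of $\psi_v(I_v)$ is not a fact about $I_v$ alone; it uses that $\psi_v$ comes from a representation of $G_{K_v}$. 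The clean way to see it: $\psi_v^{\,n}=\det\rho|_{I_v}$ is the restriction to inertia of a continuous character of $G_{K_v}$, and Frobenius conjugation on tame inertia (raising to the $q_v$-th power) forces such a restriction to have finite order; hence $\psi_v$ has finite order, so $\psi_v(I_v)\subset\mu(F)$ and the order of $\psi_v$ divides $|\mu(F)|$. Since the $m$ of Lemma \ref{lemma} is the number of roots of unity in the compositum $E\supset F$, one has $|\mu(F)|\mid m$ automatically, so in fact no further enlargement of $m$ is needed. With that correction the argument closes, and agrees with the paper's.
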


Since the inertia acts by scalar matrices and the number of roots of unity in a non-archimedean local field is finite,  it follows that the inertia acts by finite order scalar matrices. The corollary follows now from choosing $m$ as in Lemma \ref{lemma} and appealing to the above theorem. 

\begin{remark} It would be interesting to extend the foregoing result to global fields, i.e., to function fields of a curve over a finite field. The main issue is to find suitable replacements for the Hermite-Minkowski finiteness theorem. 
We thank the referee for pointing out the difficulties involved in extending the results to the setting of function fields. 
\end{remark}

 \section{Purity and finiteness theorems for potentially  equivalent $\ell$-adic representations}

\subsection{Purity}
We recall that an algebraic integer $\alpha \in \bar{\Q}$ is said to be a  Weil number of weight $w$ (corresponding to a finite field $\F_q$ or a non-archimedean local field with residue field isomorphic to $\F_q$), if for any embedding $\phi: \bar{\Q}\to \C$, $|\phi(\alpha)|=q^{w/2}$. 
The representation $\rho:G_K\rightarrow GL_n(F)$ is said to be pure of
weight $w$ and degree at most $d$,  if at any finite place $v$ of $K$ where $\rho$ is unramified, the eigenvalues of the  Frobenius conjugacy class $\rho(\sigma_v)$ are algebraic integers of weight $w$ and degree at most $d$ over $\Q$. 

We obtain a finiteness result for {\em potential} equivalence classes of 
pure semi-simple $\ell$-adic global Galois representations. This generalizes 
Faltings' finiteness result for the equivalence classes of 
pure semi-simple $\ell$-adic global Galois representations.
\begin{theorem}\label{thm:finpe}
Fix natural numbers $n, ~ w $ and $d$. Let $K$ be a number field and $\ell$ a rational prime. Let $S$ be a finite set of non-archimedean
	places of $K$ containing the places dividing $\ell$, and $F$ be a non-archimedean local field of characteristic zero and residue characteristic $\ell$. 
	
	Then there are, up to potential equivalence, only finitely many continuous, semi-simple Galois representations $\rho: G_{K} \rightarrow
	GL_n(F)$, such that the inertia at places outside $S$ acts by a scalar matrix,  and 
	 	 pure of degree at most $d$ and weight  $w$.
\end{theorem}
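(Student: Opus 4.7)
The plan is to reduce this finiteness assertion to Theorem \ref{theorem:main} (or Corollary \ref{cor:pe}) by showing that only finitely many ``$m$-trace test vectors'' $(\text{Tr}\,\rho(\sigma_v^m))_{v\in T}$ can occur under the purity hypothesis. First I would fix the integer $m$: since $F$ is fixed, the inertia at places $v\notin S$ lands in scalar matrices of order dividing some fixed integer $m$ (depending only on $n$ and $F$, as in Lemma \ref{lemma}). With this $m$ and $S$, I invoke Theorem \ref{theorem:main} to produce a finite set $T$ of places, disjoint from $S$, such that any two representations satisfying the hypotheses of the theorem with $\text{Tr}\,\rho_1(\sigma_v^m)=\text{Tr}\,\rho_2(\sigma_v^m)$ for all $v\in T$ are potentially equivalent.

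Next I would bound, place by place, the possible values of $\text{Tr}\,\rho(\sigma_v^m)$ for $v\in T$. Fix $v\in T$ and let $q=Nv$. By the purity assumption, the eigenvalues $\alpha_1,\dots,\alpha_n$ of $\rho(\sigma_v)$ are algebraic integers of weight $w$ and degree $\leq d$ over $\Q$; in particular each $\alpha_i$ is a root of a monic polynomial of degree $\leq d$ whose complex roots all have absolute value $q^{w/2}$. The elementary symmetric functions in these roots are then integers bounded in absolute value by a quantity depending only on $q,d,w$, so the set of such minimal polynomials, and hence the set of such Weil numbers, is finite. Consequently $\text{Tr}\,\rho(\sigma_v^m)=\sum_i\alpha_i^m$ takes only finitely many values as $\rho$ varies over the class under consideration.

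Since $T$ is a finite set of places, the tuple $\bigl(\text{Tr}\,\rho(\sigma_v^m)\bigr)_{v\in T}$ ranges over a finite subset of $F^{|T|}$. By Theorem \ref{theorem:main}, each such tuple determines at most one potential equivalence class, and thus there are only finitely many potential equivalence classes of representations satisfying the hypotheses of the theorem.

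The main obstacle, I expect, is a bookkeeping one rather than a conceptual one: one has to check that the integer $m$ chosen via Lemma \ref{lemma} indeed serves simultaneously as (i) a modulus killing the inertial scalars at all $v\notin S$ for every $\rho$ in the class, and (ii) an admissible input to Theorem \ref{theorem:main} so that a single finite set $T$ works uniformly for the entire family. Once this uniformity is in place, the purity hypothesis is exactly what is needed to turn the trace functional at each $v\in T$ into a finite invariant, and the deduction from Theorem \ref{theorem:main} is immediate.
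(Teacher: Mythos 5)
Your proof is correct and follows essentially the same route as the paper. You apply Lemma \ref{lemma} to pin down a single integer $m$ (valid because $F$ is fixed, so the roots of unity in $F$, and hence the orders of the inertial scalars, divide a number depending only on $n$ and $F$), feed this $m$ into Theorem \ref{theorem:main} to get a uniform finite test set $T$, and then use the finiteness of Weil numbers of bounded degree and fixed weight to conclude that the tuple of $m$-traces over $T$ takes only finitely many values. The paper does exactly this, except that it routes through Corollary \ref{cor:pe} (which is itself just Theorem \ref{theorem:main} applied with the same $m$ from Lemma \ref{lemma}), and bounds the possible characteristic polynomials of $\rho(\sigma_v^m)$ rather than the traces directly; both are equivalent and rest on the same finiteness-of-Weil-numbers observation. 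Your closing remark about the uniformity of $m$ and $T$ is precisely the bookkeeping the paper handles by fixing $m$ once via Lemma \ref{lemma} before choosing $T$; since $m$ depends only on $n$ and $F$, which are fixed in the statement, there is no circularity and a single $T$ works for the whole family.
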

\begin{proof}
Since the inertia at a place $v\not\in S$ acts by scalars valued in a non-archimedean local field $F$, it is a central element of finite order. Let $m$ be as in Lemma \ref{lemma}. As in the remark following the statement of Theorem \ref{theorem:main},  the Frobenius conjugacy class $\rho(\sigma_v^m)$ is well-defined. By Corollary \ref{cor:pe}, there exists a finite set of places $T$ of $K$ disjoint from $S$, such that potential equivalence of the Frobenius classes at these places determines the potential equivalence of the representations. It can be seen that there are only finitely many Weil numbers attached to $\F_q$ of degree bounded by $d$ and weight $w$. Consequently, there are only finitely many choices for the roots of the characteristic polynomial of $\rho(\sigma_v^m)$ for $v\in T$, and hence Theorem \ref{thm:finpe} follows. 
\end{proof}
Similarly, one can derive a finiteness result using $m$-power characters with fixed $m$, but varying $F$, as in hypothesis of Theorem \ref{theorem:main}: 
\begin{corollary}\label{cor:finpe-mchar}
	Fix natural numbers $n, ~\ell,  ~a, ~m,~ w $ and $d$. Let $K$ be a number field and $\ell$ a rational prime. Let $S$ be a finite set of non-archimedean
	places of $K$ containing the places dividing $\ell$, and $F$ be a non-archimedean local field of characteristic zero and residue field of cardinality $\ell^a$. 
	
	Then there are, up to potential equivalence, only finitely many continuous, semi-simple Galois representations $\rho: G_{K} \rightarrow
	GL_n(F)$, such that the inertia at places outside $S$ acts by a scalar matrix of order dividing $m$, and 
	 	 pure of degree at most $d$ and weight  $w$.
\end{corollary}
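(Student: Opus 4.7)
The plan is to mirror the proof of Theorem~\ref{thm:finpe}, using Theorem~\ref{theorem:main} in place of Corollary~\ref{cor:pe}, since Theorem~\ref{theorem:main} is precisely the strengthening needed to accommodate a varying local field $F$ with fixed residue cardinality $\ell^a$. First, I would apply Theorem~\ref{theorem:main} with the given parameters $n, \ell, a, m$ and the given set $S$ to produce a finite set $T$ of finite places of $K$, disjoint from $S$, such that for any two continuous semisimple representations $\rho_1, \rho_2 : G_K \to GL_n(F)$ satisfying the inertia-by-scalars-of-order-dividing-$m$ hypothesis outside $S$, the equalities $\mbox{Tr}\,\rho_1(\sigma_v^m) = \mbox{Tr}\,\rho_2(\sigma_v^m)$ for $v \in T$ force potential isomorphism. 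Crucially, $T$ depends only on $n, \ell, a, m, S, K$, not on the particular $F$, and the conjugacy class $\rho(\sigma_v^m)$ is well-defined by the remark following Theorem~\ref{theorem:main}.

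Next, I would use the purity hypothesis to bound the possible trace tuples on $T$. For each $v \in T$, the eigenvalues of $\rho(\sigma_v)$ are Weil numbers of weight $w$ and degree at most $d$ attached to the residue field $k_v$; hence the eigenvalues of $\rho(\sigma_v^m)$ are Weil numbers of weight $mw$ and degree at most $d$ attached to $k_v$. For fixed $k_v, mw$, and $d$ there are only finitely many such Weil numbers, since their minimal polynomials over $\Q$ have bounded degree and coefficients bounded in terms of $|k_v|, mw, d$ (the elementary symmetric functions of roots of bounded archimedean size). This is exactly the finiteness observation invoked in the proof of Theorem~\ref{thm:finpe}. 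Consequently $\mbox{Tr}\,\rho(\sigma_v^m)$ takes only finitely many values in $\bar{\Q}$, and the tuple $(\mbox{Tr}\,\rho(\sigma_v^m))_{v \in T}$ takes only finitely many values. Combined with the first step, this bounds the number of potential equivalence classes.

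I do not anticipate a serious obstacle; the only point requiring care is that $F$ varies. This is handled by noting that the eigenvalues of $\rho(\sigma_v^m)$, being Weil numbers, are algebraic integers in a common $\bar{\Q} \subset \bar{\Q}_\ell$ independent of any embedding $F \hookrightarrow \bar{\Q}_\ell$, so the counting of possible trace tuples is genuinely uniform in $F$. Theorem~\ref{theorem:main} then lets us identify representations valued in different $F$'s whose $m$-power traces on $T$ coincide, completing the argument.
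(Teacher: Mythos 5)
Your proof is correct and follows exactly the route the paper intends: the paper gives no explicit argument for this corollary (it merely says ``Similarly, one can derive...''), and you correctly reconstruct it by substituting Theorem~\ref{theorem:main} for Corollary~\ref{cor:pe} in the proof of Theorem~\ref{thm:finpe}, then running the same Weil-number count. The observations that the eigenvalues of $\rho(\sigma_v^m)$ are Weil numbers of weight $mw$ and degree at most $d$, that these are finite in number for each $v\in T$, and that the traces are therefore drawn from a finite set of algebraic integers, are precisely the needed ingredients; your final remark about uniformity in $F$ is a useful clarification that the paper leaves implicit.
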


\subsection{Abelian varieties}
Suppose $A$ is an abelian variety defined over a number field $K$. Denote by $T_{\ell}(A)=\varprojlim_nA[\ell^n]$ the $\ell$-adic Tate module attached to $A$, where $A[\ell^n]$ denotes the group of elements of order  $\ell^n$ in $A(\bar{K})$. Let $V_\ell(A):=\Q_\ell\otimes_{\Z_\ell} T_{\ell}(A)$. 

It is shown in (\cite[Theorem 3]{ST}), that  at a place $v$ where $A$ has potential good reduction, the characteristic polynomial of the Frobenius conjugacy class $\rho(\sigma_v)$ is well-defined and has coefficients belonging to rational integers.  Since the weight is one, the assumptions of Theorem  \ref{thm:finpe} are satisfied. Consequently, the following holds: 
 
\begin{corollary}\label{cor:fin-abvar}
Let $K$ be a number field and $S$ be a finite set of non-archimedean
	places of $K$. 
	
	Then there are, up to potential equivalence, only finitely many Abelian varieties of dimension $g$ defined over $K$, such that the inertia at a place $v$ not in $S$ acts by scalars on a Tate module  $V_\ell(A)$ for $\ell$ coprime to the residue characteristic at $v$. 
\end{corollary}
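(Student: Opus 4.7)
The plan is to apply Theorem \ref{thm:finpe} to the family of $\ell$-adic Tate module representations attached to the abelian varieties in question. Fix any rational prime $\ell$ and enlarge $S$ to also contain the (finitely many) places of $K$ dividing $\ell$; this only enlarges the class of abelian varieties satisfying the hypothesis, so it suffices to prove the finiteness statement for this larger class. For each such abelian variety $A/K$ of dimension $g$, attach the associated Tate module representation
\[ \rho_{A,\ell}:\ G_K \longrightarrow GL(V_\ell(A)) \simeq GL_{2g}(\Q_\ell). \]
This representation is continuous and semisimple (by Faltings), has dimension $n = 2g$, and by assumption the inertia $I_v$ acts by scalars at every $v \notin S$.

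Next I would verify the purity hypothesis of Theorem \ref{thm:finpe} with $n = 2g$, $w = 1$, $d = 2g$. For $v \notin S$, let $\chi_v: I_v \to \Q_\ell^*$ be the scalar inertia character. Since $\det \rho_{A,\ell}$ is a power of the $\ell$-adic cyclotomic character, which is unramified at $v$ (as $v \nmid \ell$), we obtain $\chi_v^{2g} = (\det \rho_{A,\ell})|_{I_v} = 1$. Thus $\chi_v$ takes values in the finite group $\mu_{2g}(\Q_\ell)$; in particular $I_v$ acts through a finite quotient, so $A$ has potential good reduction at $v$. By the Serre--Tate theorem (\cite[Theorem 3]{ST}) quoted in the excerpt, with $m$ chosen as in Lemma \ref{lemma}, the characteristic polynomial of $\rho_{A,\ell}(\sigma_v^m)$ has rational integer coefficients and its roots are Weil numbers of weight $m$ attached to the residue field at $v$. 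Equivalently, the eigenvalues of $\rho_{A,\ell}(\sigma_v)$, well-defined modulo an $m$-th root of unity, are Weil numbers of weight $1$; as roots of a monic integer polynomial of degree $2g$, they have degree at most $2g$ over $\Q$. This supplies the purity input needed for Theorem \ref{thm:finpe}.

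Theorem \ref{thm:finpe} then yields only finitely many potential equivalence classes among the representations $\rho_{A,\ell}$ arising from abelian varieties satisfying the hypothesis, which implies the corollary. The main subtlety to handle -- and the step I expect to be the chief obstacle -- is the compatibility between the definition of purity in Theorem \ref{thm:finpe}, stated at unramified places, and our present setting where the inertia at $v \notin S$ is only trivial ``up to scalars''; as in the proof of Theorem \ref{thm:finpe} itself, this is resolved by passing to $\rho_{A,\ell}(\sigma_v^m)$ throughout, exploiting the remark after Theorem \ref{theorem:main} that this conjugacy class is unambiguously defined.
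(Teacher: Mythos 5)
Your overall strategy matches the paper's: pass to the Tate module representation $\rho_{A,\ell}$, verify the hypotheses of the finiteness theorem for potentially equivalent representations, and conclude. The paper takes a slightly different route in the middle -- it invokes \cite[Theorem~2]{ST} to see that the inertial character at $v\notin S$ is $\Z$-valued and hence quadratic, and then applies Corollary~\ref{cor:finpe-mchar} with $m=2$; you instead bound the order of the inertial scalar via $\det\rho_{A,\ell}$ being a power of the cyclotomic character, and apply Theorem~\ref{thm:finpe} directly. Both routes verify purity via \cite[Theorem~3]{ST} and Lemma~\ref{lemma}, and both are fine.

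There is, however, a genuine gap at the end. You conclude ``Theorem~\ref{thm:finpe} then yields only finitely many potential equivalence classes among the representations $\rho_{A,\ell}$\dots which implies the corollary.'' That last implication does not come for free: finiteness of potential equivalence classes of $\ell$-adic representations is a statement about Galois modules, while the corollary is a statement about abelian varieties. To bridge the two, the paper invokes the Tate--Zarhin--Faltings isogeny theorem
\[
\text{Hom}(A,B)\otimes\Q_\ell \simeq \text{Hom}_{G_K}\bigl(V_\ell(A),V_\ell(B)\bigr),
\]
which converts (potential) isomorphism of Tate modules into (potential) isogeny of the underlying abelian varieties, together with Faltings' finiteness theorem that each isogeny class over $K$ contains only finitely many isomorphism classes. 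Without these inputs the passage from ``finitely many representations'' to ``finitely many abelian varieties'' is unjustified, so you should add this final step.
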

\begin{proof} The hypothesis implies that the inertia acts by finite order scalars. Hence the abelian variety has potential good reduction at places $v$ not in $S$. By (\cite[Theorem 2]{ST}), the inertial character takes  values in $\Z$ and is independent of $\ell$. This limits the choice of the inertial character at places outside $S$ to be quadratic. It is also known that the representation on the Tate module is semi-simple. We can take $m=2$ in the foregoing corollary to obtain that up to potential equivalence, there are only finitely many Galois representations attached to Tate modules of abelian varieties satisfying the hypothesis of the corollary. 
The corollary follows as a consequence of the isogeny theorem of Tate-Zarhin-Faltings (\cite{Fa}), 
\[ \text{Hom}(A,B)\otimes \Q_{\ell}\simeq  \text{Hom}_{G_K}(V_{\ell}(A), V_{\ell}(B)),\]
where $A$ and $B$ are abelian varieties defined over $K$ together with the finiteness theorem of Faltings which says that each isogeny class of abelian varieties over the number field $K$ only contains finitely many isomorphism classes.
\end{proof}

\section{Twist unramified representations}
In this section, we consider finiteness theorems for twist unramified representations. 
\subsection{Forms and Dirichlet twists}
\begin{definition} Let $\rho: G_K\rightarrow GL_n(F)$ be a continuous representation of $G_K$. A {\em form} $\rho'$ of $\rho$ is  a continuous representation  $\rho': G_K\rightarrow GL_n(F)$ such that the restrictions of $\rho$ and $\rho'$ to $G_L$ are isomorphic, for some  finite extension $L$ of $K$. 

The representation $\rho: G_K\rightarrow GL_n(F)$ is \textit{twist unramified}  at a finite place $v$ of $K$ if there exists a form $\rho'$ of $\rho$ (possibly depending on $v$) which  is unramified at $v$. 
\end{definition}

We consider a more restricted notion of twisting by a finite order character.  Recall that a Dirichlet character of $K$ is a continuous character $\chi:G_K\to \bar{\Q}_{\ell}^\times$  of finite order.  

\begin{definition} Let $K$ be a number field and $S$ a set of places of $K$ containing the archimedean places of $K$. A continuous representation $\rho:G_K\to GL_n(F)$ is said to be {\em unramified outside $S$ up to Dirichlet twists}, if for each finite place $v$ not in $S$, there exists a Dirichlet character $\chi$ such that $\rho\otimes \chi$ is unramified at $v$. 

We will say that $\rho$ is {\em unramified outside $S$ up to $F$-valued Dirichlet twists} if in addition to the above, the character $\chi$ takes values in $F^*$. 
\end{definition}
A consequence of Hilbert Theorem 90 and Schur's lemma give a connection between forms and Dirichlet twists: 
\begin{proposition}\label{prop:Schur}
Suppose $\rho, ~\rho'$ are continuous representations of $G_K$  to $GL_n(F)$.
Suppose there is a finite extension $L$ of $K$ such that the restrictions of $\rho$ and $\rho'$ are absolutely irreducible and isomorphic. Then there exists a 
Dirichlet character $\chi$ of $G_K$ with values in $F^*$ such that $\rho'\simeq
 \rho\otimes \chi$. 
\end{proposition}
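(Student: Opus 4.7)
The plan is to reduce to the setting where $\rho$ and $\rho'$ actually agree on $G_L$ (not just up to conjugation), and then use Schur's lemma to extract the character. First I would replace $L$ by its Galois closure over $K$, so that $G_L$ is a normal subgroup of finite index in $G_K$. By hypothesis there is $A\in GL_n(F)$ with $\rho'(h)=A\rho(h)A^{-1}$ for all $h\in G_L$; conjugating $\rho$ by $A$ (which does not change its isomorphism class) I may assume $\rho(h)=\rho'(h)$ for every $h\in G_L$.

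The key step is then an elementary computation exploiting the normality of $G_L$. For any $\sigma\in G_K$ and any $h\in G_L$, I have $\sigma h\sigma^{-1}\in G_L$, and therefore
\[
\rho(\sigma)\,\rho(h)\,\rho(\sigma)^{-1}=\rho(\sigma h\sigma^{-1})=\rho'(\sigma h\sigma^{-1})=\rho'(\sigma)\,\rho(h)\,\rho'(\sigma)^{-1}.
\]
Rewriting this shows that $\rho'(\sigma)^{-1}\rho(\sigma)$ lies in the centralizer of the image of $\rho|_{G_L}$ inside $M_n(F)$. Since $\rho|_{G_L}$ is absolutely irreducible, Schur's lemma over $F$ forces this centralizer to be $F\cdot I$. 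Hence there exists a unique scalar $c(\sigma)\in F^{\times}$ with
\[
\rho(\sigma)=c(\sigma)\,\rho'(\sigma),\qquad \sigma\in G_K.
\]

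Setting $\chi(\sigma):=c(\sigma)^{-1}$, I would next verify the three properties needed. Multiplicativity is immediate by comparing $\rho'(\sigma\tau)=\chi(\sigma\tau)\rho(\sigma\tau)$ with $\rho'(\sigma)\rho'(\tau)=\chi(\sigma)\chi(\tau)\rho(\sigma)\rho(\tau)$ and using the invertibility of $\rho(\sigma\tau)$. Continuity of $\chi$ follows from the continuity of $\rho$ and $\rho'$, by reading off $\chi(\sigma)$ as the ratio of corresponding (nonzero) matrix entries of $\rho(\sigma)$ and $\rho'(\sigma)$ in a neighborhood of any fixed element. Finally, $\chi|_{G_L}\equiv 1$ by construction, so $\chi$ factors through $\mathrm{Gal}(L/K)$, which is finite; in particular $\chi$ is of finite order and takes values in $F^{\times}$, i.e. it is a Dirichlet character of $K$ valued in $F^{\times}$, and $\rho'\simeq \rho\otimes \chi$.

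The only genuine subtlety is the appeal to Schur's lemma: over a non-algebraically closed field $F$ one only knows in general that the commutant of an irreducible representation is a division algebra $D\supseteq F$. The hypothesis of \emph{absolute} irreducibility is exactly what is needed here, for then extension of scalars to $\bar F$ shows $D\otimes_F\bar F$ is a field, forcing $D=F$. Apart from this observation, the argument is formal; the invocation of Hilbert 90 alluded to in the statement is implicit in our assumption that the $G_L$-intertwiner $A$ is defined over $F$ and not merely over some extension (otherwise one would first need to descend the projective intertwiner from $PGL_n$ to $GL_n$, a step governed by $H^1(G_L,F^{\times})$).
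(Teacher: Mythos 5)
Your proof is essentially the same as the paper's: conjugate so that $\rho$ and $\rho'$ agree on $G_L$, observe that $T(\sigma)=\rho'(\sigma)^{-1}\rho(\sigma)$ lies in the commutant of the restriction to $G_L$, and apply Schur's lemma (absolute irreducibility over $F$) to get a scalar, yielding the character $\chi$. You spell out the multiplicativity, continuity, and finite-order verifications that the paper omits, and your closing remark correctly identifies the role that Hilbert 90 plays (descending a $\bar F$-intertwiner to an $F$-intertwiner), which the paper invokes more cryptically.

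There is, however, a gap that you should address. You replace $L$ by its Galois closure $\widetilde L$ so that $G_{\widetilde L}$ is normal in $G_K$ --- this is indeed needed for the computation $\rho(\sigma h\sigma^{-1})=\rho'(\sigma h\sigma^{-1})$ to make sense (the paper's assertion that ``$T(\sigma)$ intertwines $\rho|_{G_L}$ and $\rho'|_{G_L}$'' tacitly assumes $G_L\lhd G_K$ for the same reason). But the hypothesis gives absolute irreducibility of $\rho|_{G_L}$, not of $\rho|_{G_{\widetilde L}}$, and this does not automatically descend: a representation can be absolutely irreducible on a finite-index subgroup yet reducible on its normal core. The failure is not merely cosmetic, since the conclusion of the proposition can actually fail when $L/K$ is not Galois. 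For instance, realize $A_5$ as a Galois quotient of $G_K$, let $G_L$ be the preimage of an $A_4$ (so $L/K$ is a non-Galois quintic), and let $\rho_1,\rho_2$ be the two inequivalent $3$-dimensional representations of $A_5$ viewed as representations of $G_K$: they agree (and are absolutely irreducible) on $G_L$, but $A_5$ has no nontrivial abelian characters, so $\rho_1$ is not a twist of $\rho_2$. So the proposition as literally stated needs $L/K$ Galois, or one needs to add the hypothesis that $\rho$ remains absolutely irreducible on $G_{\widetilde L}$. In the paper's intended application (Theorem \ref{theorem:fintwist}), the assumption that the connected component $G_\rho^{\circ}$ acts absolutely irreducibly guarantees absolute irreducibility on every finite-index subgroup, so there the issue evaporates; but in your proof you should either assume $L/K$ Galois from the outset or justify why absolute irreducibility persists after passing to $\widetilde L$.
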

Note that the Dirichlet characters are actually valued in $F^*$. 
\begin{proof} The collection of matrices which conjugate the representation $\rho|_{G_L}$ to $\rho'|_{G_L}$ is a torsor for $GL_1$ defined over
$F$. By Hilbert Theorem 90, we can assume after conjugating by a matrix in $GL_n(F)$ that $\rho|_{G_L}\equiv \rho'|_{G_L}$. 

It can be checked that for every $\sigma\in G_K$, the operator $T(\sigma):=\rho'(\sigma)^{-1}\rho(\sigma)$ intertwines the representations $\rho|_{G_L}$ and $\rho'|_{G_L}$. By Schur's  lemma, $T(\sigma)=\chi(\sigma)I_n$, for some character $\chi:G_K\to \bar{F}^*$. Since $T(\sigma)$ is in $GL_n(F)$, this implies that $\chi$ takes values in $F^*$. 
\end{proof}

\subsection{Finiteness theorems for twists}
The following theorem is a consequence of Theorem \ref{thm:finpe}. 
\begin{theorem}\label{theorem:Dirichlet-pe}
	Let $K$ be a number field and $\ell$ a rational prime. Let $S$ be a finite set of non-archimedean
	places of $K$ containing the places dividing $\ell$ of $K$, and $F$ be a non-archimedean local field of characteristic zero and residue characteristic $\ell$. 
	Fix natural numbers $n, w$ and $d$. Then, up to potential equivalence,   there are only finitely many continuous semi-simple representations $\rho:G_K\longrightarrow\text{GL}_n(F)$ satisfying the following: 
\begin{itemize}
\item The representations $\rho$ are unramified outside $S$ up to Dirichlet twists. 
\item $\rho$ is  pure of weight at most $w$ and degree bounded by $d$. 

\end{itemize}
\end{theorem}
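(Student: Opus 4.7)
The plan is to reduce directly to Theorem \ref{thm:finpe}. The only gap between the hypotheses stated here and those of Theorem \ref{thm:finpe} is twofold: (i) the representation is only assumed to be unramified outside $S$ up to Dirichlet twists, rather than satisfying the stronger property that inertia acts by scalar matrices at places outside $S$; and (ii) the weight is allowed to range over $\{0,1,\dots,w\}$ rather than being a single fixed natural number.

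For point (i), the key observation is that ``unramified up to Dirichlet twist'' forces scalar inertial action. Fix a place $v\notin S$ and let $\chi_v$ be a Dirichlet character such that $\rho\otimes\chi_v$ is unramified at $v$. Then for every $g\in I_v$, we have $\chi_v(g)\rho(g)=I_n$, so $\rho(g)=\chi_v(g)^{-1}I_n$ is a scalar matrix. Because $\chi_v$ has finite order, these scalars are roots of unity, and since $\rho(g)\in GL_n(F)$ they automatically lie in $F^\times$. Thus $\rho$ satisfies the hypothesis of Theorem \ref{thm:finpe} that the inertia at every place $v\notin S$ acts by scalar matrices of finite order.

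For point (ii), since the weight of each $\rho$ is a natural number in $\{0,1,\dots,w\}$, we apply Theorem \ref{thm:finpe} separately for each such weight, obtaining in each case finitely many potential equivalence classes. Taking the union over these finitely many values of the weight yields the desired conclusion.

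There is essentially no substantive obstacle; the statement is a packaging of Theorem \ref{thm:finpe} once one makes the inertia translation above. The only mild subtlety to flag is that the Dirichlet character $\chi_v$ chosen at $v$ is a priori $\bar{\Q}_\ell^\times$-valued rather than $F^\times$-valued, but this is irrelevant because the character is used only through its restriction to $I_v$, and that restriction automatically lands in $F^\times$ as a consequence of $\rho(I_v)\subset GL_n(F)$. No refined version of Proposition \ref{prop:Schur} is needed.
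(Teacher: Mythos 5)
Your reduction is correct and matches the paper's intent, which simply asserts that Theorem \ref{theorem:Dirichlet-pe} is a consequence of Theorem \ref{thm:finpe} without giving details. The key observation you supply---that if $\rho\otimes\chi_v$ is unramified at $v\notin S$ then for $g\in I_v$ one has $\rho(g)=\chi_v(g)^{-1}I_n$, a scalar matrix automatically valued in $F^\times$ because $\rho(I_v)\subset GL_n(F)$---is exactly the translation needed, and your handling of the range of weights by taking the finite union over $w'\in\{0,\dots,w\}$ correctly addresses the mismatch between ``weight $w$'' in Theorem \ref{thm:finpe} and ``weight at most $w$'' here.
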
 

From  the foregoing theorem  and Proposition \ref{prop:Schur}, we deduce now an analogue of Faltings' finiteness theorem for twist unramified representation: 
\begin{theorem}\label{theorem:fintwist}
	Let $K$ be a number field and $\ell$ a rational prime.  Let $S$ be a finite set of non-archimedean
	places of $K$ containing the places dividing $\ell$ of $K$, and $F$ be a non-archimedean local field of characteristic zero and residue characteristic $\ell$. 
	Fix natural numbers $n, w$ and $d$. 
	
Then, up to twist by finite order characters $\chi:G_K\rightarrow F^{*}$,  there are only finitely many isomorphism classes of continuous semi-simple representations $\rho:G_K\longrightarrow\text{GL}_n(F)$ satisfying the following: 
\begin{itemize}
\item The representations $\rho$ are twist unramified outside $S$. 
\item $\rho$ is  pure of weight at most $w$ and degree bounded by $d$. 
\item The connected component 
$G_\rho^{\circ}$ of identity of the algebraic monodromy group $G_{\rho}$ of $\rho$  acts absolutely irreducibly on $F^n$. 

\end{itemize}
\end{theorem}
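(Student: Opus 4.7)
The plan is to reduce Theorem~\ref{theorem:fintwist} to Theorem~\ref{theorem:Dirichlet-pe} via two applications of Proposition~\ref{prop:Schur}: first to translate the twist unramified hypothesis into being unramified outside $S$ up to $F^*$-valued Dirichlet twists, and second to promote the resulting finiteness up to potential equivalence to finiteness up to Dirichlet twist by a finite order character $\chi:G_K\to F^*$.

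For the first step, I would introduce the finite Galois extension $K^\circ/K$ corresponding to the open subgroup $\rho^{-1}(G_\rho^\circ(F))$ of $G_K$. The hypothesis on $G_\rho^\circ$ implies that $\rho|_{G_{K^\circ}}$ is absolutely irreducible; moreover, $\rho$ restricted to any open subgroup of finite index in $G_{K^\circ}$ remains absolutely irreducible, since the algebraic monodromy on any such subgroup is still $G_\rho^\circ$. For each $v\notin S$, the twist unramified hypothesis yields a form $\rho_v'$ of $\rho$ unramified at $v$, isomorphic to $\rho$ on some $G_L$; after enlarging $L$ so that it contains $K^\circ$, both restrictions to $G_L$ are absolutely irreducible. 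Proposition~\ref{prop:Schur} then produces a finite order character $\chi_v:G_K\to F^*$ such that $\rho_v'\simeq \rho\otimes \chi_v$. In particular $\rho\otimes \chi_v$ is unramified at $v$, so $\rho$ is unramified outside $S$ up to $F^*$-valued Dirichlet twists.

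Since $\rho$ remains pure of weight at most $w$ and degree bounded by $d$, Theorem~\ref{theorem:Dirichlet-pe} then applies and confines the collection of such $\rho$ to finitely many potential equivalence classes. It remains to show that each such class is contained in a single Dirichlet twist class. If $\rho_1,\rho_2$ are potentially equivalent and each satisfies the hypotheses of the theorem, they become isomorphic on some $G_L$; I would enlarge $L$ so that it contains the corresponding extensions $K^\circ_1$ and $K^\circ_2$ constructed above. On $G_L$ both restrictions are absolutely irreducible and isomorphic, so a final application of Proposition~\ref{prop:Schur} yields a finite order character $\chi:G_K\to F^*$ with $\rho_2\simeq \rho_1\otimes \chi$, completing the proof.

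The main conceptual point (and the only genuinely new ingredient beyond the earlier results of the paper) is the observation that the absolute irreducibility hypothesis forces forms of $\rho$ --- a priori allowed to differ from $\rho$ on arbitrary finite extensions --- to be realized uniformly as $F^*$-valued Dirichlet twists. This is what is needed both to feed into Theorem~\ref{theorem:Dirichlet-pe} and to collapse potential equivalence classes to Dirichlet twist classes; without the absolute irreducibility assumption, Proposition~\ref{prop:Schur} would no longer apply and the intertwining characters a priori would only be valued in $\bar{F}^*$ rather than $F^*$.
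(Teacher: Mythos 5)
Your proposal is correct and takes essentially the same approach the paper indicates: the paper states the theorem ``follows from Theorem~\ref{theorem:Dirichlet-pe} and Proposition~\ref{prop:Schur}'' without spelling out the deduction, and your argument supplies precisely the intended details. In particular, you correctly isolate the two places where the absolute irreducibility hypothesis is used (via the observation that the algebraic monodromy of $\rho|_H$ is still $G_\rho^\circ$ for any open subgroup $H$ of finite index in $G_{K^\circ}$): first to realize each local form $\rho_v'$ as an $F^*$-valued Dirichlet twist of $\rho$ so that Theorem~\ref{theorem:Dirichlet-pe} applies, and second to collapse each resulting potential-equivalence class to a single twist class.
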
 
As an application of theorem $\ref{theorem:fintwist}$, we get finiteness of the set of isomorphism classes of non-CM elliptic curves over a number field with twisted good reduction outside $S$. 
\begin{corollary}\label{cor1}
	Let $S$ be a finite set of primes of $K$ containing the archimedean primes of $K$. The set of isomorphism class of non-CM elliptic curves defined over $K$ with twisted good reduction outside $S$ is finite up to twists.  
\end{corollary}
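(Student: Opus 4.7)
The plan is to associate to each non-CM elliptic curve $E/K$ with twisted good reduction outside $S$ its $\ell$-adic Galois representation $\rho_E : G_K \to GL_2(\Q_\ell)$ on the rational Tate module $V_\ell(E)$, for some fixed rational prime $\ell$, and then apply Theorem \ref{theorem:fintwist} with $F=\Q_\ell$, $n=2$, $w=1$ and $d=2$; the desired finiteness up to twist of elliptic curves will then be deduced from the finiteness up to twist of the representations $\rho_E$.

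To verify the hypotheses of Theorem \ref{theorem:fintwist}: semisimplicity of $\rho_E$ is due to Faltings; purity of weight $1$ and degree at most $2$ is immediate from the Weil conjectures at places of good reduction; and the absolute irreducibility of $G_{\rho_E}^{\circ}$ on $F^2$ follows from Serre's result that the Zariski closure of $\rho_E(G_K)$ is all of $GL_2$ when $E$ is non-CM. The substantive point is the twist-unramified condition: given $v\notin S$ and a form $E'/K$ of $E$ with $E'_L\simeq E_L$ for some finite extension $L/K$ and $E'$ of good reduction at $v$, Serre--Tate gives $\rho_{E'}$ unramified at $v$, and non-CM ensures that $\rho_E|_{G_L}$ is absolutely irreducible. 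Proposition \ref{prop:Schur} then yields a Dirichlet character $\chi_v : G_K \to \Q_\ell^{\times}$ with $\rho_{E'} \simeq \rho_E \otimes \chi_v$, so $\rho_E \otimes \chi_v$ is unramified at $v$, showing that $\rho_E$ is unramified outside $S$ up to Dirichlet twists.

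Applying Theorem \ref{theorem:fintwist} then shows that our collection of representations $\rho_E$ is finite modulo twist by finite order Dirichlet characters. To translate back to elliptic curves, suppose $E_1, E_2$ are such that $\rho_{E_2} \simeq \rho_{E_1} \otimes \chi$ for some finite order $\chi$. Taking determinants and using that both $\det \rho_{E_i}$ equal the $\ell$-adic cyclotomic character forces $\chi^2 = 1$, so $\chi$ is quadratic. Then $\rho_{E_2} \simeq \rho_{E_1^{\chi}}$ for the quadratic twist $E_1^{\chi}$, and Faltings' isogeny theorem gives that $E_2$ is $K$-isogenous to $E_1^{\chi}$. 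The crucial compatibility is that the $K$-isogeny class of $E_1^{\chi}$ is precisely the $\chi$-twist of the $K$-isogeny class of $E_1$ (since quadratic twisting commutes with $K$-isogeny), so $E_2 = F^{\chi}$ for some $F \sim_K E_1$, and hence $j(E_2) = j(F^{\chi}) = j(F)$ lies in the finite set of $j$-invariants realized in the $K$-isogeny class of $E_1$ (finite by Faltings' finiteness theorem for isomorphism classes within a $K$-isogeny class).

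Combining these statements, each of the finitely many twist classes of representations yields only finitely many $j$-invariants of elliptic curves, giving the claimed finiteness up to twist. The main obstacle is the final translation step: the natural map from twist classes of non-CM elliptic curves to twist classes of representations is not injective a priori, and bounding its fibers requires the cyclotomic-determinant constraint (to force $\chi$ quadratic), Faltings' isogeny theorem, and the compatibility of quadratic twisting with the formation of $K$-isogeny classes and hence with the collection of $j$-invariants appearing in them.
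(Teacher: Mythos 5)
Your proof is correct and follows the same overall strategy as the paper (apply Theorem \ref{theorem:fintwist} to the Tate module representations, then pass back to elliptic curves via Faltings' isogeny theorem and the finiteness of isomorphism classes within a $K$-isogeny class), but you supply considerably more detail in the final translation step, which the paper disposes of by the terse remark ``the corollary follows as in the proof of Corollary \ref{cor:fin-abvar}.'' In particular your observation that, since $\det\rho_{E_i}$ is the cyclotomic character for both curves, any finite-order $\chi$ with $\rho_{E_2}\simeq \rho_{E_1}\otimes\chi$ must satisfy $\chi^2=1$ and hence be a quadratic character, is exactly what is needed to convert twist-equivalence of representations into twist-equivalence (i.e.\ same $j$-invariant) of curves; the paper does not spell this out. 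Likewise your point that quadratic twisting commutes with formation of $K$-isogeny classes, so the set of $j$-invariants realized in the isogeny class of $E_1^{\chi}$ is independent of $\chi$ and finite by Faltings, is implicit in the paper but made precise by you. Two minor points: (i) you should enlarge $S$ to include the places above the auxiliary prime $\ell$ before invoking Theorem \ref{theorem:fintwist}, as its hypothesis requires; and (ii) your verification of the twist-unramified condition via Serre--Tate is sufficient on its own --- the further step producing a Dirichlet twist via Proposition \ref{prop:Schur} is harmless but not needed to check that hypothesis, since Theorem \ref{theorem:fintwist} already carries out that reduction internally using absolute irreducibility.
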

For non-CM elliptic curves, the associated Galois representation on its Tate module $\rho: G_K\rightarrow GL_2(\Z_\ell)$ has open image and is an absolutely irreducible representation. Hence it satisfies the hypothesis of Theorem $\ref{theorem:fintwist}$ with weight $w=1$. The corollary follows as in the proof of Corollary \ref{cor:fin-abvar}. 
 Note that corollary $\ref{cor1}$ is already proved in \cite[Theorem 0.3]{BB}. They adapt Shafarevich's well-known proof (see \cite[Chapter IV, Theorem 1.4]{Se1}) of the corresponding assertions for usual good reduction outside $S$. 

\begin{remark} Theorem \ref{theorem:fintwist} can be applied for the Galois representations arising from the Tate module of abelian varieties, provided the algebraic monodromy group is `sufficiently large' in that and it satisfies the hypothesis of the theorem. The theorem will be applicable for general members of modular families of abelian varieties whose Mumford-Tate group satisfies the hypothesis of Theorem \ref{theorem:fintwist}. 

From this, one can deduce a finiteness theorem for twist equivalence of higher genus curves, provided the Mumford-Tate group of the Hodge structure associated to the first cohomology is large enough. 

It is not in general true that a curve has good reduction if its Jacobian variety has good reduction. Oda has shown in (\cite[Theorem 3.2]{Od}), that a smooth, projective and geometrically connected  curve $C$ over $K$ has good reduction if and only if the outer $G_K$-action on its $\Q_\ell$-unipotent fundamental group $\pi_1^\et(C_{\overline{K}})_{\Q_\ell}$ is unramified for all $\ell\neq p$ (equivalently for some $\ell\ne p$). 
\end{remark}

\subsection*{Acknowledgements} The first author thanks the School of Mathematics, Tata Institute of Fundamental Research, Mumbai and UM-DAE Centre for Excellence in Basic Sciences, Mumbai for their support. The research of the first author is supported by a DST-INSPIRE Fellowship IF150349. 

We thank the referee for many valuable suggestions improving and correcting the exposition of the paper.

 \end{document}